\newtheorem{thm}{Theorem}[section]
\newtheorem{prop}[thm]{Proposition}
\newtheorem{lem}[thm]{Lemma}
\newtheorem{cor}[thm]{Corollary}
\numberwithin{equation}{section} 
\theoremstyle{definition}
\newtheorem{defn}[thm]{Definition}
\theoremstyle{remark}
\newcommand{\ato}{\alpha_{21}}
\newcommand{\D}{\Delta}
\renewcommand{\H}{\mathcal H^\ast}
\newcommand{\Kh}{K_h}
\newcommand{\lo}{\ell_1}
\newcommand{\ltw}{\ell_2}
\newcommand{\lth}{\ell_3}
\newcommand{\Rh}{\mathcal R_h}
\newcommand{\s}{\mathfrak h_{z_1, z_2}}
\newcommand{\tho}{\theta_1}
\newcommand{\ttw}{\theta_2}
\newcommand{\zt}{z_3}
\renewcommand{\H}{\mathcal H}
\begin{document} 

{\allowdisplaybreaks
\title[Symmetrization of Cauchy-Like kernels]{Symmetrization of a family of Cauchy-Like kernels: \\ Global instability}
\author[Lanzani and Pramanik]{Loredana Lanzani
and Malabika Pramanik}
\address{
Dept. of Mathematics,       
Syracuse University 
Syracuse, NY 13244-1150 USA}
  \email{llanzani@syr.edu}
\address{
Dept. of Mathematics\\University of British Columbia
\\
Room 121, 1984 Mathematics Rd.
\\Vancouver, B.C. Canada   V6T 1Z2}
\email{malabika@math.ubc.ca}
  \thanks{2000 \em{Mathematics Subject Classification:} 30C40, 31A10, 31A15}
\thanks{{\em Keywords}: Cauchy integral, double layer potential,
  Menger curvature, singular integral, kernel symmetrization}
\begin{abstract} 
The fundamental role of the Cauchy transform in harmonic and
complex analysis has led to many different proofs of its $L^2$ boundedness.
In particular, a famous proof of  Melnikov-Verdera \cite{MV} relies upon an iconic symmetrization identity of Melnikov \cite{M} 
 linking the universal Cauchy kernel $K_0$ to
Menger curvature.  
Analogous identities hold for the real and the imaginary parts of $K_0$ as well. Such connections have been immensely productive in the
study of singular integral operators and in geometric measure theory.
\vskip0.1in 
\noindent In this article, given any function $h: \mathbb C \rightarrow \mathbb R$,
we consider an inhomogeneous variant $K_h$ of $K_0$ which is inspired by complex
function theory. While an operator with integration kernel $K_h$ is easily
seen to be $L^2$-bounded for all $h$, the symmetrization identities for each of the real and imaginary parts of
$K_h$ show a striking lack of robustness in terms of boundedness and
positivity, two properties that were critical in \cite{MV}
 and in subsequent works by many authors. Indeed here we show that
for any continuous $h$ on $\mathbb C$, the only member of $\{K_h\}_h$ whose symmetrization
has the right properties is $K_0$! 
This global instability 
complements our
previous investigation \cite{LPr} of symmetrization identities in the restricted setting of a curve, 
where a sub-family of $\{K_h\}_h$ displays very different behaviour than its global counterparts considered here. Our methods of
proof have some overlap with techniques in recent work of Chousionis-Prat \cite{CP} and Chunaev \cite{C}.
\end{abstract}
\maketitle
\section{Introduction}
\noindent Multivariate algebraic expressions that are invariant
under permutations of the underlying variables are termed
symmetric forms. Identities involving such forms, henceforth referred to as
{\bf {symmetrization identities}}, abound in mathematics. Their appeal
lies in the physical interpretation of the various quantities that
they embody, which could be geometric, analytic or combinatorial in
nature. One such instance \cite{M} was discovered in 1995 by M. Melnikov 
in regards to
 the {\bf universal Cauchy kernel}
 \begin{equation}\label{E:01}
K_0(w, z) \ :=\ 
\frac{1}{w-z}, \qquad z, w \in \mathbb C, \; z \ne w. 
\end{equation}
This exceptionally simple identity is the source of many results of import in harmonic and complex analysis, and the starting point of this article. We begin by describing it.
\vskip0.1in
\noindent For any complex-valued function $K$ defined on a domain in $\mathbb C^2$, let $\mathtt S[K]$ denote the following {\bf{symmetrized form associated with $K$:}}
\[ \mathtt S[K] := \sum_{\sigma \in S_3} K(z_{\sigma(1)}, z_{\sigma(2)}) \overline{K(z_{\sigma(1)}, z_{\sigma(3)})}, \]
where $S_3$ denotes the group of permutations over three elements, and 
 $\mathbf z = \{z_1, z_2, z_3\}$ is any three-tuple of distinct points in $\mathbb C$ for which the above expression is meaningful. The symmetrization identity of Melnikov \cite{M} concerns $\mathtt S[K_0]$. Namely, it says 
\begin{equation}
\mathtt S[K_0] = \sum\limits_{\sigma\in S_3} \frac{1}{z_{\sigma(1)} -
    z_{\sigma(2)}} \times \frac{1}{\overline{z_{\sigma(1)} -
      z_{\sigma(3)}}} = c^2(\mathbf z), \label{E:sym-id-0p}
      \end{equation} 
 where $c(\mathbf z)$ represents the {\bf {Menger curvature}} for the points  $\{ z_1, z_2,
z_3\}$. Let us recall that the Menger curvature of three non-collinear points
 is the reciprocal of the radius of the unique circle that passes through
these three points. If the points are collinear, $c(\mathbf z)$ is taken to be
zero.  Moreover, the Menger curvature $c^2(\mathbf z)$ splits evenly between the symmetric forms of the real and imaginary parts, in the following sense:   
\[\mathtt S[K_0] = \mathtt S[\text{Re}(K_0)] + \mathtt S[\text{Im}(K_0)], \footnote{This general fact about $\mathtt S$ is proved in Section \ref{SS:Basic}} \text{ with } \] 
      \begin{align}  
\mathtt S[\text{Re}(K_0)] = \sum\limits_{\sigma\in S_3} \text{Re} \Bigl(\frac{1}{z_{\sigma(1)} -
    z_{\sigma(2)}}\Bigr) \times \text{Re}
  \Bigl(\frac{1}{{z_{\sigma(1)} - z_{\sigma(3)}}}\Bigr)\,
  &=\ \frac{1}{2}\,c^2(\mathbf z), \label{E:sym-id-0pr} \\ 
\mathtt S[\text{Im}(K_0)] = \sum\limits_{\sigma\in S_3} \mathrm{Im}\Bigl(\frac{1}{z_{\sigma(1)} -
    z_{\sigma(2)}}\Bigr)  \times
  \mathrm{Im}\Bigl(\frac{1}{{z_{\sigma(1)} -
      z_{\sigma(3)}}}\Bigr)\,  &=\  \frac{1}{2}\,c^2(\mathbf z). \label{E:sym-id-0pi}
  \end{align} 
 In 1995, Melnikov and Verdera \cite{MV} discovered that the identity \eqref{E:sym-id-0p} leads to
a new proof of famous results  by A. Calder\`on; Coifman, McIntosh and Meyer that established $L^2(\Gamma, \sigma)$-regularity of the Cauchy transform for a planar Lipschitz curve $\Gamma$ \cite{{Calderon}, {CMM}}. The new proof \cite{MV} and the subsequent 
work by Mattila, Melnikov and Verdera \cite{MMV} brought to the fore new connections between the analytic capacity and the rectifiability of the support of the reference measure: thus  a hugely prolific line of investigation began that has had
  a deep and ever-lasting impact
  on the
theory of singular integral operators and on geometric measure theory. A survey of the extensive literature would take us outside of the scope of the present work; instead we defer to the monographs \cite{P} and \cite{T} and,
for more recent progress, to \cite{{CMPT}, {CMPT2}, {CP}, {C}, {CMT}, {CMT2}}.
\vskip0.1in
%
%
%
%
%

\subsection{Objectives} The purpose of this article is to provide analogues of the global symmetrization
identities \eqref{E:sym-id-0p}, \eqref{E:sym-id-0pr} and \eqref{E:sym-id-0pi}  for a new family of integration kernels that 
arise naturally in 
complex function theory and in potential theory. Specifically, we investigate two basic features of such identities, 
\vskip0.1in 
\begin{itemize}
\item[\tt(i)] {\em{Global boundedness relative to Menger curvature}};
\vskip0.1in
and
 \vskip0.1in 
 \item[\tt(ii)] {\em{Global positivity.}} 
 \end{itemize}
 \vskip0.1in
 These properties play an important role in the proofs of many of the results mentioned above and are
enjoyed by  the symmetrized forms of each of $K_0$, $\text{Re}K_0$ and $\text{Im}K_0$ 
following \eqref{E:sym-id-0p}, \eqref{E:sym-id-0pr} and \eqref{E:sym-id-0pi}. Here and throughout, by {\bf global property} we mean a property that holds for all three-tuples $\mathbf z = \{z_1, z_2, z_3\}$ of distinct points in $\mathbb C$. 


\vskip0.1in 

\noindent In this article we consider the family of kernels
$\{K_h \}$ parametrized by functions $h: \mathbb C\to \mathbb R$, 
\begin{equation} \label{def-Kh}
K_h(w, z) := \frac{e^{i h(w)}}{w-z},\quad w,\ z\in\mathbb C,\ w\neq z.
\end{equation} 
 Setting $h \equiv 0$ (or a constant) yields the universal Cauchy kernel $K_0$ (or a constant multiple of it). On the other hand, for non-constant $h$ our kernel
   $K_h$ is significantly different in nature from $K_0$ because the latter is a homogeneous function of $w$ and $z$, while the former is not. The definition of $K_h$ is driven by complex-analytic considerations that 
 we have discussed in detail
  in the companion paper \cite{LPr}, where the focus is specialized to 
  those continuous functions $h: \mathbb C\to \mathbb R$ such that 
 $j^*K_h= K_\Gamma$ where $j: \Gamma \hookrightarrow \mathbb C$ is the inclusion map and
 \begin{equation}\label{E:K-restr}
 K_\Gamma (w, z) =\frac{1}{2\pi i}\frac{\mathbf t_\Gamma (w)}{(w-z)},\quad w, z\in \Gamma,\quad w\neq z
 \end{equation}
 for a given curve $\Gamma$ of class $C^1$ with unit tangent vector $\mathbf t_\Gamma$. By contrast here we work with the unrestricted $K_h$ given in \eqref{def-Kh} and we ask the following questions: 
 \vskip0.1in
{\em Are the symmetrization identites \eqref{E:sym-id-0pr} and \eqref{E:sym-id-0pi} preserved by the family $\{K_h\}_h$? Even if not, do the basic features {\tt(i)} and {\tt(ii)} still hold?
 }
 \vskip0.1in 
\noindent We answer both these questions in the negative. Specifically, we show that the exact analogues of the identities \eqref{E:sym-id-0pr} and \eqref{E:sym-id-0pi}  no longer hold for 
 ${\tt S}[\text{Re}K_h]$ or ${\tt S}[\text{Im}K_h]$ in the sense that neither is comparable to
 $c^2(\mathbf z)$. We do obtain identities that serve as substitutes of \eqref{E:sym-id-0pr} and \eqref{E:sym-id-0pi}, see Proposition \ref{P:symm-ids-Re-Im}. But these new identities show that
 neither  ${\tt S}[\text{Re}K_h]$ nor ${\tt S}[\text{Im}K_h]$ satisfies any of
  the basic features {\tt(i)} and {\tt(ii)}.
  More precisely,
in Theorem \ref{T:L-infty} we show that the condition ``$h \equiv$ constant on $\mathbb C$'' is {\em{equivalent}} to any one of $\mathtt S[\text{Re} K_h](\mathbf z)/c^2(\mathbf z)$ or $\mathtt S [\text{Im}(K_h](\mathbf z)/c^2(\mathbf z)$ being globally bounded: hence basic feature {\tt(i)} fails for  $\mathtt S[\text{Re} K_h]$ and $\mathtt S[\text{Im} K_h]$  with non-constant $h$;
 in fact it
 fails already
  for three-tuples confined to tri-disks, that is
 $\mathbf z = (z_1, z_2, z_3) \in \mathbb D^3$  for any disk $\mathbb D\subset \mathbb C$.
On a related theme, in Theorem \ref{T:positive} we show that the condition ``$h \equiv$ constant'' is {\em{equivalent}} to any one of $\mathtt S[\text{Re} K_h]$ or $\mathtt S[\text{Im} K_h]$ maintaining a fixed sign across $\mathbb C^3$; thus the basic feature {\tt(ii)} also fails for  $\mathtt S[\text{Re} K_h]$ and $\mathtt S[\text{Im} K_h]$ whenever $h$ is non-constant. 
\vskip0.1in

\subsection{Conclusion} For nontrivial $h$ the kernels $\{\text{Re}K_h\}$ and $\{\text{Im}K_h\}$ fail to satisfy both of the basic features {\tt(i)} and {\tt(ii)}:
%
 this is surprising because the symmetrization identity for the full universal Cauchy kernel $K_0$, see \eqref{E:sym-id-0p}, and the corresponding basic features {\tt(i)} and {\tt (ii)}, hold trivially when $K_0$ is replaced by $K_h$ 
  on account of the fact that
 $$
 \mathtt S\,[K_h](\mathbf z)  = \mathtt S\,[K_0](\mathbf z) 
 $$
for any $\mathbf z$ in $\mathbb C^3$ and  for any $h: \mathbb C\to\mathbb R$ (no continuity assumption needed).
\vskip0.1in
\noindent This dichotomy stands in contrast with symmetrized identities in the restricted context, and is best appreciated in comparison with the latter class of results. In \cite{LPr}, we study the symmetrized forms $\mathtt{S}[K_0]$, $\mathtt{S}[\text{Re}(K_0)]$ and $\mathtt{S}[\text{Im}(K_0)]$ for triples $\mathbf z = (z_1, z_2, z_3) \in \mathbb C^3$, where each $z_j$ lies on a fixed rectifiable curve $\Gamma$ with prescribed regularity. We show that a few members of $\{\text{Re}K_h\}$, namely those for which $j^*K_h = K_\Gamma$ with $K_\Gamma$  as in
\eqref{E:K-restr}, will satisfy $\Gamma$-restricted analogues of the basic features {\tt (i)} and {\tt (ii)}, while their counterparts in $\{\mathrm{Im}\Kh\}$ satisfy $\Gamma$-restricted versions of {\tt(i)} though not necessarily of {\tt(ii)}. 
\vskip0.1in
\noindent
 After completing this work, we learned that  some of the methods of proof we employ here bear similarities to techniques developed in unrelated recent work by Chousionis and Prat \cite{CP} and by Chunaev \cite{C}. For instance,
 the proofs of our representation formul\ae\,  for  $\mathtt S[\text{Re} K_h]$ or $\mathtt S[\text{Im} K_h]$, given in Proposition \ref{P:symm-ids-Re-Im} below, rely upon a certain labeling scheme of the vertices of a triangle, which is a device that also appears in \cite[Proposition 3.1]{CP} and \cite[Lemma 6]{C}; see also Figures 3 and 4 in \cite[p. 2738]{C} and the computations accompanying those diagrams.

     \subsection{Organization of this paper} 
     
     For the reader's convenience, the
      well-known background is summarized in  section \ref{S:Background}.
     In sections \ref{SS:Setup} through \ref{SS:global boundedness}
     we state  the results pertaining to Re$\Kh$ and Im$\Kh$.
      Further results pertaining to $K^*_h$ (the dual kernel of $\Kh$)  are stated in  section \ref{SS:further}.
          All proofs are collected in section \ref{S:proofs-main}.  
          
      \vskip0.1in


\section{Description of the results} \label{Statement-of-Results-section}
\subsection{Setup}\label{SS:Setup} 
\vskip0.1in  
As mentioned earlier, our kernel $K_h$ is given by 
 \begin{equation}\label{E:02}
K_h (w, z) \ :=\  
\frac{e^{i h(w)}}{w-z}
\end{equation}
where $h: \mathbb C\to \mathbb R$ is a given function. If $h$ is a constant function, then $K_h$ is a constant multiple of $K_0$ given in \eqref{E:01}. 

\subsection{Symmetrization identites for arbitrary $h$.}\label{SS: symmetrization identities} 
It is immediate to see that
 \begin{equation}\label{E:same}
 \mathtt S\,[K_h](\mathbf z) \, =\, 
\mathtt  S[K_0] (\mathbf z) = c^2(\mathbf z)\
 \end{equation}
 for any three-tuple of distinct points
 and for any $h:\mathbb C\to\mathbb R$ (no continuity assumption needed here).
\vskip0.1in
\noindent The first natural question that presents itself is whether the phenomenon \eqref{E:same} is inherited by the real and the imaginary parts of $\Kh$.
 On account of \eqref{E:sym-id-0pr} and \eqref{E:sym-id-0pi} this amounts to asking
  whether the identities
\begin{equation}\label{E:sym-id-0ph}
 \begin{aligned}
 \mathtt S\,[\text{Re}K_h](\mathbf z) \ &=\, \mathtt S\,[\text{Re}K_0](\mathbf z)
 \,=\,\frac12\, c^2(\mathbf z) ,\quad \text{and}
 \\& \\
  \mathtt S\,[\mathrm{Im}K_h](\mathbf z) \ &=\, \mathtt S\,[\mathrm{Im}K_0](\mathbf z)
  \,=\,\frac12\, c^2(\mathbf z)
\end{aligned}
  \end{equation}
  are true for all (or for some) non-constant $h:\mathbb C\to \mathbb R$. We answer this question in the negative. 
\vskip0.1in 
\noindent  In order to state the precise result
 we adopt a
 specific 
  labeling scheme for three-tuples of non-collinear points. 
\begin{defn}\label{D:admissible}
We say that an ordered three-tuple of non-collinear points
$(a, b, c)$ is arranged in {\em admissible order} (or is {\em admissible}, for short) if {\tt(a)}
the orthogonal projection of $c$ onto the line determined by $a$ and $b$ falls in the interior of the line segment joining $a$ and $b$, and {\tt(b)} the triangle $\Delta (a, b, c)$ has positive counterclockwise orientation.
\end{defn}
\noindent We will show in section \ref{SS:LabelingScheme} that any three-tuple of non-collinear points
has at least one admissible ordering.
\begin{prop}\label{P:symm-ids-Re-Im}
For any non-constant $h:\mathbb C\to \mathbb R$ and for any three-tuple $\mathbf z$
 of non-collinear points in $\mathbb C$ we have
 \item[]\quad 
 \begin{equation}\label{E:new-symm-Re}
    \mathtt S\,[\mathrm{Re}K_h](\mathbf z) 
  \displaystyle{
  =\ c^2(\mathbf z)\left(\frac12\, + \, \Rh (\mathbf z)\right)}
 \end{equation}
\begin{equation}\label{E:new-symm-Im}
  \mathtt S\,[\mathrm{Im}K_h](\mathbf z) 
  \displaystyle{
 =\ c^2(\mathbf z)\left(\frac12\, - \, \Rh (\mathbf z)\right)}
 \end{equation}
 where $\Rh $ is non-constant and invariant under the permutations of the elements of $\mathbf z$. If
 $\mathbf z= (z_1, z_2, z_3)$ is admissible then $\Rh (\mathbf z)$ is represented as follows:

 \begin{equation}\label{E:RH}
\begin{aligned}
\Rh (\mathbf z) &=\frac{2\,\ell_1\ell_2\ell_3}{\left(4\mathrm{Area}\,\Delta(\mathbf z)\right)^2}\, \times 
\Bigl[
\ell_1\cos(\s(z_1) -\tho) + 
\\ 
&\hskip1in + 
\ell_2\cos (\s(z_2) +\ttw)
-\ell_3\cos(\s(z_3) +\ttw -\tho)
\Bigr]\, .
\end{aligned} 
\end{equation} 
Here $\theta_j$ denotes the angle at $z_j$, and $\ell_j$ denotes the length of the side opposite to $z_j$ in $\Delta(\mathbf z)$.
 Also, we have set
 $$\s(z) := 2h(z) -2\ato\, ,\quad z\in\mathbb C,
  $$
where $\ato$ is the principal argument of $z_2-z_1$ (in an arbitrarily fixed coordinate system for $\mathbb R^2$). 
 \end{prop}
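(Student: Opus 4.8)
\textbf{Proof plan for Proposition \ref{P:symm-ids-Re-Im}.}

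The plan is to compute $\mathtt S[\mathrm{Re}K_h]$ and $\mathtt S[\mathrm{Im}K_h]$ directly from the definitions, isolating the ``new'' contribution that distinguishes $K_h$ from $K_0$. First I would write $K_h(w,z) = e^{ih(w)}/(w-z) = e^{ih(w)}K_0(w,z)$, so that $\mathrm{Re}K_h(w,z) = \cos(h(w))\mathrm{Re}K_0(w,z) - \sin(h(w))\mathrm{Im}K_0(w,z)$ and similarly for $\mathrm{Im}K_h$. Plugging these into the definition of $\mathtt S$ and expanding, each summand over $\sigma\in S_3$ factors as a product of two terms sharing the index $\sigma(1)$; the phase $h(z_{\sigma(1)})$ is common to both factors in that summand. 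Collecting terms, one obtains $\mathtt S[\mathrm{Re}K_h] = \sum_\sigma \bigl[\cos^2 h(z_{\sigma(1)})\,\mathrm{Re}K_0\,\mathrm{Re}K_0 - 2\cos h\sin h\,\mathrm{Re}K_0\,\mathrm{Im}K_0 + \sin^2 h\,\mathrm{Im}K_0\,\mathrm{Im}K_0\bigr]$ with appropriate arguments, and a mirror expression for $\mathtt S[\mathrm{Im}K_h]$ with $\cos$ and $\sin$ interchanged in the outer coefficients. Adding the two recovers $\mathtt S[\mathrm{Re}K_0] + \mathtt S[\mathrm{Im}K_0] = c^2(\mathbf z)$ (using the general additivity fact from Section \ref{SS:Basic} together with \eqref{E:sym-id-0p}), which already forces the two expressions to be of the form $c^2(\frac12 + \Rh)$ and $c^2(\frac12 - \Rh)$ for a single function $\Rh$; subtracting them gives $2c^2\,\Rh = \mathtt S[\mathrm{Re}K_h] - \mathtt S[\mathrm{Im}K_h]$, which after using $\cos^2 - \sin^2 = \cos 2h$ and $2\cos h\sin h = \sin 2h$ becomes a sum over $\sigma$ of $\cos(2h(z_{\sigma(1)}))\bigl[(\mathrm{Re}K_0)^2-(\mathrm{Im}K_0)^2\bigr] - 2\sin(2h(z_{\sigma(1)}))\,\mathrm{Re}K_0\,\mathrm{Im}K_0$, i.e. of $\mathrm{Re}\bigl(e^{2ih(z_{\sigma(1)})}\,\overline{K_0(z_{\sigma(1)},z_{\sigma(2)})}\,\overline{K_0(z_{\sigma(1)},z_{\sigma(3)})}\bigr)$ up to sign bookkeeping. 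This shows $\Rh$ is manifestly permutation-invariant (it is built from a symmetric sum) and is the object to be identified with \eqref{E:RH}.

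The second and more laborious step is to convert the complex-exponential sum for $\Rh$ into the trigonometric form \eqref{E:RH} in terms of the side lengths $\ell_j$, the angles $\theta_j$, and the normalized phases $\s(z_j) = 2h(z_j) - 2\ato$. Here I would invoke the admissible labeling of Definition \ref{D:admissible}: fix the coordinate system so that $z_1, z_2$ lie on a horizontal line with $z_2 - z_1 = \ell_3 e^{i\ato}$, and $z_3$ above. Then each difference $z_{\sigma(1)} - z_{\sigma(j)}$ has modulus one of the $\ell_k$ and argument expressible through $\ato$ and the triangle angles $\theta_j$ — this is exactly the role of Figures 3 and 4 of \cite{C} and the labeling device in \cite[Proposition 3.1]{CP}. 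Writing $1/(z_p - z_q) = e^{-i\arg(z_p-z_q)}/\ell$, the six summands in the $S_3$-sum pair up into three ``types'' according to which vertex is $\sigma(1)$, and each type contributes a $\cos$ of $\s(z_j)$ shifted by a combination of $\tho$ and $\ttw$ (the side-$z_3$ terms carrying $\ttw - \tho$ because of the constraint $\tho + \ttw + \tth = \pi$). The common prefactor $2\ell_1\ell_2\ell_3/(4\,\mathrm{Area}\,\Delta)^2$ emerges because $c^2(\mathbf z) = \bigl(\ell_1\ell_2\ell_3/(4\,\mathrm{Area}\,\Delta)\bigr)^{-2}$ — wait, more precisely $c(\mathbf z) = 4\,\mathrm{Area}\,\Delta/(\ell_1\ell_2\ell_3)$ — so dividing the exponential sum (whose size is governed by products $1/(\ell_p\ell_q)$) by $c^2$ produces the displayed rational factor after combining. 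The sign pattern $+,+,-$ on the three cosine terms in \eqref{E:RH} is dictated by the orientation clause {\tt(b)} of admissibility and must be tracked carefully through the arguments of the $K_0$ factors.

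The final routine step is to check that $\Rh$ is non-constant: it suffices to exhibit one triple on which $\Rh \ne 0$ and another (or a perturbation) giving a different value, which is easy since for non-constant $h$ one can choose $\mathbf z$ where the phases $\s(z_j)$ are not all aligned to make the bracket in \eqref{E:RH} nonzero, while rescaling or translating the triangle changes the prefactor or the phases. The main obstacle is the bookkeeping in the second step: correctly assigning which $\ell_k$ and which angular offset attaches to each of the six permutation summands, and verifying that the three resulting groups collapse to precisely the three cosine terms with the stated shifts and signs. This is where the admissible ordering is essential — without a canonical labeling the arguments of the $K_0$-factors cannot be pinned down — and it is the step where an error in a sign or an angle would propagate into the wrong formula, so I would organize it as a short lemma on the arguments $\arg(z_p - z_q)$ under admissible ordering before assembling the sum.
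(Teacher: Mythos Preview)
Your plan is correct and close to the paper's, with one organizational difference worth noting. The paper works directly with $\mathtt S[\mathrm{Re}K_h]$: writing $\mathrm{Re}K_h(z_j,z_k)=\mathrm{Re}\bigl(e^{-ih(z_j)}(z_j-z_k)\bigr)/|z_j-z_k|^2$, each summand in the admissible labeling becomes a product $\cos(h(z_j)-\alpha_{21}-\cdot)\cos(h(z_j)-\alpha_{21}-\cdot)$, and the product-to-sum identity $2\cos\gamma\cos\lambda=\cos(\gamma+\lambda)+\cos(\gamma-\lambda)$ splits this into a term $\cos\theta_j/(\ell_k\ell_l)$ (independent of $h$) and a term $\cos(\mathfrak h_{z_1,z_2}(z_j)\pm\cdot)/(\ell_k\ell_l)$; summing the first type over $j$ gives $c^2/2$ via the Menger identity \eqref{E:Mcos-2}, and the second type is exactly the bracket in \eqref{E:RH} after multiplying through by $\ell_1\ell_2\ell_3$ and invoking \eqref{E:aux-one}.

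Your route reaches the same destination by adding and subtracting $\mathtt S[\mathrm{Re}K_h]$ and $\mathtt S[\mathrm{Im}K_h]$: the sum recovers $\mathtt S[K_h]=\mathtt S[K_0]=c^2$ wholesale (so you never need \eqref{E:Mcos-2}), while the difference, after the double-angle substitutions, is $\sum_{\sigma}\mathrm{Re}\bigl(e^{2ih(z_{\sigma(1)})}K_0(z_{\sigma(1)},z_{\sigma(2)})K_0(z_{\sigma(1)},z_{\sigma(3)})\bigr)$ --- note the factors should be $K_0$ rather than $\overline{K_0}$, as your ``up to sign bookkeeping'' caveat anticipates. The geometric step, expressing each $\arg(z_p-z_q)$ through $\alpha_{21}$ and $\theta_1,\theta_2$ under the admissible ordering, is then identical in both approaches; the paper packages it via \eqref{E:trig-id-1}--\eqref{E:trig-id-2} and \eqref{E:elem-trig}, which is precisely the ``short lemma on the arguments $\arg(z_p-z_q)$'' you propose. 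Your version has the small advantage of making the permutation-invariance of $\Rh$ transparent from the outset; the paper's has the advantage of never needing $\mathtt S[\mathrm{Im}K_h]$ separately.
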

\vskip0.1in
 \noindent {\bf Remarks (A).}
 \begin{enumerate}[1.]
\item  \label{R:A} A discussion of the invariance properties of these symmetrized forms is in order.  While $\mathtt S[K_0](\mathbf z)$ (and thus $\mathtt S[K_h](\mathbf z)$), $\mathtt S\,[\text{Re}K_0](\mathbf z)$ and $\mathtt S\,[\mathrm{Im}K_0](\mathbf z)$) are clearly invariant under rotations and translations of 
$\mathbb R^2$, this is no longer the case for 
 $\mathtt S\,[\text{Re}K_h](\mathbf z)$ or $\mathtt S\,[\mathrm{Im}K_h](\mathbf z)$. 
 Yet, \eqref{E:new-symm-Re} and \eqref{E:new-symm-Im} show that each of
these symmetrized forms
 contains a term that is invariant under rotations and translations of $\mathbb R^2$. 
 \vskip0.05in
 \item The well-posedness of the righthand sides of \eqref{E:new-symm-Re} and \eqref{E:new-symm-Im}
  for three-tuples of distinct points, as opposed to non-collinear points,  is addressed in section \ref{SS:Symm-ids-pf}, see Remark {(B)} in that section.
 \end{enumerate}

\subsection{Boundedness and positivity: global results for arbitrary $h$.}
\label{SS:global boundedness}  
We next ask whether either 
 $\mathtt S\,[\text{Re}K_h](\mathbf z)$ or $\mathtt S\,[\mathrm{Im}K_h](\mathbf z)$ 
obeys the basic global features {\tt(i)} and {\tt(ii)}.
 The answer is negative in all instances: we prove below that $\Rh$ is either constant (in fact zero) or unbounded!

 \begin{thm}\label{T:L-infty}
 Suppose that $h:\mathbb C\to \mathbb R$ is continuous.  The following are equivalent:
\begin{itemize}
\item[]
\item[{\tt (i)}]  \ \ 
There is  a constant $C<\infty$, possibly depending on $h$, such that $$|\Rh (\mathbf z)|\leq C$$
\ \ for any  three-tuple $\mathbf z=\{z_1, z_2, z_3\}$ of non-collinear points in $\mathbb C$.

\item[]
\item[{\tt (ii)}] \quad 
$\Rh (\mathbf z) \, =\, 0$ \, for any three-tuple
of non-collinear points in $\mathbb C$.
\item[]
\item[{\tt (iii)}] \quad
 $h$ is constant.
 \item[]
\end{itemize}
\end{thm}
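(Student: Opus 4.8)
The implications $\mathtt{(iii)}\Rightarrow\mathtt{(ii)}\Rightarrow\mathtt{(i)}$ are immediate: if $h$ is constant then $\Rh$ is constant by Proposition \ref{P:symm-ids-Re-Im} (since $\mathtt S[\mathrm{Re}K_h]=\mathtt S[\mathrm{Re}K_0]=\tfrac12 c^2$ in that case), and the remark following the proposition already asserts $\Rh$ is non-constant precisely when $h$ is non-constant, forcing the constant value to be $0$; and $\mathtt{(ii)}\Rightarrow\mathtt{(i)}$ is trivial with $C=0$. So the entire content is the contrapositive of $\mathtt{(i)}\Rightarrow\mathtt{(iii)}$: \emph{if $h$ is non-constant, then $\Rh$ is unbounded on three-tuples of non-collinear points.} The plan is to exploit the representation \eqref{E:RH} by probing it along carefully chosen degenerating families of triangles, where the geometric prefactor $2\ell_1\ell_2\ell_3/(4\,\mathrm{Area}\,\Delta(\mathbf z))^2$ blows up while the bracketed trigonometric factor stays bounded away from zero.

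The main idea: since $h$ is non-constant there exist points $p\ne q$ in $\mathbb C$ with $h(p)\ne h(q)$, and by continuity $h$ is ``nearly constant and unequal'' on small balls $B(p,\varepsilon)$ and $B(q,\varepsilon)$. I would take $z_1, z_2$ to be a \emph{short} segment near one of these points (say both near $q$, so that $\ato\approx$ the argument of that short segment, which I can orient freely), and $z_3$ near the other point $p$, at a fixed distance $\sim|p-q|$ from the segment. Then $\ell_3=|z_1-z_2|\to 0$, while $\ell_1,\ell_2\sim|p-q|$ and $\mathrm{Area}\,\Delta(\mathbf z)\sim\ell_3\cdot|p-q|\to 0$, so the prefactor $\sim \ell_1\ell_2\ell_3/(\ell_3\,|p-q|)^2\sim 1/\ell_3\to\infty$. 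Meanwhile $\s(z_1)-\s(z_2)=2h(z_1)-2h(z_2)\to 0$ since both $z_1,z_2$ are near $q$, and the angles $\theta_1,\theta_2\to 0$ while $\theta_3\to\pi$; the three cosines in the bracket then collapse (up to vanishing errors) to a single expression of the form $2\ell_1\cos(\s(z_1)-\tho-\cdots)$ type — I need to track the limiting value of the bracket and show it is a nonzero multiple of $|p-q|$, which after dividing by the vanishing $\ell_3$-scale in the prefactor still leaves something that $\to\infty$. The key is that the bracket does \emph{not} degenerate to zero: the near-cancellation that makes $\mathtt S[\mathrm{Re}K_h]$ reduce to $\tfrac12 c^2$ when $h\equiv\text{const}$ is exactly the statement ``bracket $=0$ when $\s$ is constant,'' so for non-constant $h$ one must show the bracket picks up a term proportional to the \emph{variation} of $h$, i.e. to $h(p)-h(q)\ne 0$ (to first order, via a Taylor/difference expansion of the cosines around the constant-$h$ configuration).

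Concretely, I would parametrize: fix $q=0$, let $z_1=-t$, $z_2=t$ (so $\ato=0$), and $z_3=p+O(t)$ with $p$ real positive, $\mathrm{Im}\,p$ arranged to give positive orientation — actually I'd take $z_3$ off the real axis at height comparable to $|p|$ — and then expand \eqref{E:RH} in powers of $t$. The leading behaviour of $\Rh$ should be $\asymp t^{-1}\bigl(h(p)-h(0)\bigr) + o(t^{-1})$ (or with an extra bounded trigonometric factor that I can keep away from zero by choosing the direction of $z_3-q$ appropriately, using that $h(p)\ne h(0)$ lets me fix a sign). Letting $t\to 0$ gives $|\Rh|\to\infty$, contradicting $\mathtt{(i)}$, hence $h$ must be constant. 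I would also need the elementary Lemma from section \ref{SS:LabelingScheme} that every such triple has an admissible ordering, and simply re-label to put $\mathbf z$ in admissible order before applying \eqref{E:RH} (the value $\Rh(\mathbf z)$ is permutation-invariant, so this is harmless).

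\textbf{Main obstacle.} The delicate point is the second paragraph's claim: showing the bracketed trigonometric factor in \eqref{E:RH} does not itself vanish to the same (or higher) order as the area, which would kill the blow-up. This requires a careful first-order expansion of the three cosines about the ``constant-$h$'' configuration and verifying that the coefficient of $(h(p)-h(0))$ in that expansion is a nonzero geometric quantity — and that I have enough freedom in positioning $z_3$ (its direction from the $z_1z_2$ segment) to make that coefficient's sign cooperate. I expect this reduces to a short but nontrivial computation: differentiate the bracket in \eqref{E:RH} with respect to the common value of $h$ near $q$ versus near $p$, and read off a term like $2\ell_1\ell_2\sin(\cdots)\,(h(p)-h(q))$ with $\sin(\cdots)\ne 0$ for generic angle. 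Once that nonvanishing is secured, the blow-up of the prefactor finishes the argument.
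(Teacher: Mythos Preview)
Your plan for $\mathtt{(i)}\Rightarrow\mathtt{(iii)}$ has a genuine gap: the degenerating family you propose (two vertices $z_1,z_2$ coalescing near $q$, with $z_3$ held at a fixed point $p$) does \emph{not} make $\Rh$ blow up. First, the angles go the other way than you state: with $\ell_3=|z_1-z_2|$ the short side, the law of sines forces $\theta_3\to 0$ and $\theta_1,\theta_2\to\pi/2$. More importantly, a direct computation of \eqref{E:RH} in this configuration shows the bracket vanishes at exactly the rate needed to cancel the prefactor. Take $z_1=-t$, $z_2=t$, $z_3=ip$ (admissible, with $\ato=0$), and suppose for clarity that $h(-t)=h(t)=h(0)$ (which is the case in which your mechanism is supposed to operate, since you want the signal to come from $h(p)-h(q)$, not from local variation of $h$ near $q$). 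Then $\theta_1=\theta_2=\theta$ with $\cos\theta=t/\sqrt{t^2+p^2}$, $\ell_1=\ell_2=\sqrt{t^2+p^2}$, $\ell_3=2t$, $\mathrm{Area}=tp$, and the bracket equals
\[
\ell_1\bigl[\cos(2h(0)-\theta)+\cos(2h(0)+\theta)\bigr]-\ell_3\cos(2h(ip))
=2\ell_1\cos(2h(0))\cos\theta-2t\cos(2h(ip))=2t\bigl[\cos(2h(0))-\cos(2h(ip))\bigr].
\]
Since the prefactor equals $(t^2+p^2)/(4tp^2)\sim 1/(4t)$, this gives
\[
\Rh(\mathbf z)\ \longrightarrow\ \tfrac12\bigl[\cos(2h(0))-\cos(2h(ip))\bigr],
\]
a number of absolute value at most $1$. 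So the very cancellation you flagged as the ``main obstacle'' is in fact total: the bracket is $O(\ell_3)$, and your expected leading term $\asymp t^{-1}(h(p)-h(q))$ does not appear. (If $h$ happens to be non-Lipschitz at $q$ you can get blow-up from the $h(z_1)-h(z_2)$ difference, but that has nothing to do with $h(p)\neq h(q)$ and is not available for general continuous $h$.)

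The paper uses a different degeneration: keep $z_1$ and $z_2$ \emph{fixed} and slide $z_3(\theta)$ down to a point $z_4=(1-\beta)z_1+\beta z_2$ strictly in the interior of the segment $[z_1,z_2]$. Here the denominator behaves like $\sin\theta_1\sin\theta_2\sim\theta^2$, while the bracket converges to
\[
\ell_3\Bigl[(1-\beta)\cos\mathfrak h_{z_1,z_2}(z_1)+\beta\cos\mathfrak h_{z_1,z_2}(z_2)-\cos\mathfrak h_{z_1,z_2}(z_4)\Bigr],
\]
which has no reason to vanish. Boundedness of $\Rh$ therefore forces the functional equation
\[
\cos\mathfrak h_{z_1,z_2}\bigl((1-\beta)z_1+\beta z_2\bigr)=(1-\beta)\cos\mathfrak h_{z_1,z_2}(z_1)+\beta\cos\mathfrak h_{z_1,z_2}(z_2)
\]
for all $\beta\in(0,1)$ and all $z_1\ne z_2$. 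A scaling trick extends this to all $\beta>0$, and since the right side is affine in $\beta$ while the left side is bounded, the slope must vanish; this makes $h$ constant along every ray from $z_1$, and continuity at $z_1$ then forces $h$ constant. The correct picture is ``flatten the apex onto the base,'' not ``collapse the base to a point.''

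A minor point on $\mathtt{(iii)}\Rightarrow\mathtt{(ii)}$: it is not quite automatic that $\mathtt S[\mathrm{Re}K_h]=\tfrac12 c^2$ when $h\equiv c$, because $\mathrm{Re}K_h=\cos c\,\mathrm{Re}K_0-\sin c\,\mathrm{Im}K_0$ and $\mathtt S$ is not linear. One still needs the short trigonometric computation (using \eqref{E:Mcos-2} and the law of sines) that the bracket in \eqref{E:RH} vanishes identically when $\s$ is constant.
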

\noindent The proof will in fact show the following stronger conclusion: 
 if we only assume that $h$ is continuous on a disc $\mathbb D_0\subset\mathbb C$, then the boundedness of 
 $\Rh (\mathbf z)$ for any three-tuple $\mathbf z$ of points in $\mathbb D_0$ is equivalent to $h$ being constant on $\mathbb D_0$.
\vskip0.1in

\noindent An immediate consequence of Theorem \ref{T:L-infty} is
that  no analog 
of the positivity condition {\tt(ii)} can hold simultaneously for  $\mathtt S\,[\text{Re}K_h]$ and 
$\mathtt S\,[\mathrm{Im}K_h]$ unless $h=$ const. Specifically, we have
\begin{cor}\label{C:L-infty}
 Suppose that $h:\mathbb C\to \mathbb R$ is continuous. Then
$$
h\ \text{is\ constant}\quad \iff\quad
 \left(\frac12 -\Rh(\mathbf z)\right)\!\! \left(\frac12 +\Rh(\mathbf z)\right) > 0
$$
for any three-tuple of  non-collinear points in $\mathbb C$.
\end{cor}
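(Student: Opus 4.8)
\textbf{Proof plan for Corollary \ref{C:L-infty}.}

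The plan is to deduce this corollary directly from Theorem \ref{T:L-infty}, using the algebraic structure of the product $\bigl(\tfrac12-\Rh(\mathbf z)\bigr)\bigl(\tfrac12+\Rh(\mathbf z)\bigr)=\tfrac14-\Rh(\mathbf z)^2$. The forward implication is the easy one: if $h$ is constant then Theorem \ref{T:L-infty} (the implication \texttt{(iii)}$\Rightarrow$\texttt{(ii)}) gives $\Rh(\mathbf z)\equiv 0$, so the product equals $\tfrac14>0$ for every three-tuple of non-collinear points, and we are done.

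For the reverse implication I would argue by contraposition: assume $h$ is \emph{not} constant and produce a three-tuple of non-collinear points at which the product $\tfrac14-\Rh(\mathbf z)^2$ fails to be strictly positive, i.e.\ at which $|\Rh(\mathbf z)|\ge \tfrac12$. Since $h$ is continuous but non-constant, Theorem \ref{T:L-infty} (via the failure of \texttt{(i)} when \texttt{(iii)} fails) tells us that $\Rh$ is \emph{unbounded} on the set of non-collinear three-tuples. In particular there exists a three-tuple $\mathbf z$ with $|\Rh(\mathbf z)|$ as large as we like — certainly $\ge \tfrac12$ — and for that $\mathbf z$ we have $\tfrac14-\Rh(\mathbf z)^2\le 0$, so the displayed strict inequality fails. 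This contradicts the assumed positivity for \emph{all} non-collinear three-tuples, completing the contrapositive.

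There is one routine subtlety worth flagging rather than an obstacle: the statement of the corollary asserts the equivalence with positivity quantified over non-collinear three-tuples, and $\Rh$ is the permutation-invariant function appearing in Proposition \ref{P:symm-ids-Re-Im}, which by \eqref{E:new-symm-Re}--\eqref{E:new-symm-Im} satisfies $\mathtt S[\mathrm{Re}K_h](\mathbf z)\cdot \mathtt S[\mathrm{Im}K_h](\mathbf z)=c^2(\mathbf z)^2\bigl(\tfrac14-\Rh(\mathbf z)^2\bigr)$; since $c^2(\mathbf z)>0$ precisely for non-collinear $\mathbf z$, the sign of the product of the two symmetrized forms is exactly the sign of $\tfrac14-\Rh(\mathbf z)^2$ there, so no information is lost in passing between the two formulations. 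The only step requiring genuine content is the unboundedness of $\Rh$, and that is precisely what Theorem \ref{T:L-infty} supplies — indeed its strengthened local version, applied on any disc on which $h$ is non-constant, shows the failure already occurs for tri-disk three-tuples, which one may record as a remark.
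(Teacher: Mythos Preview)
Your proof is correct and follows essentially the same approach as the paper: both directions are deduced immediately from Theorem~\ref{T:L-infty}, using that $\bigl(\tfrac12-\Rh\bigr)\bigl(\tfrac12+\Rh\bigr)=\tfrac14-\Rh^2$ is positive precisely when $|\Rh|<\tfrac12$. The paper phrases the reverse implication directly (positivity for all $\mathbf z$ gives $|\Rh|<\tfrac12$, hence boundedness, hence $h$ constant by \texttt{(i)}$\Rightarrow$\texttt{(iii)}), while you argue by contraposition, but the content is identical.
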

\noindent In fact more is true.
\begin{thm}\label{T:positive}
 Suppose that $h:\mathbb C\to \mathbb R$ is continuous. 
 
 \vskip0.1in
\begin{itemize}
\item[{\tt (a)}]  \ \  If $h$ is constant, then
$$\displaystyle{\frac12 +\Rh (\mathbf z) > 0}$$
\quad for any  three-tuple
 of non-collinear points.
\item[]
\vskip0.1in
\item[{\tt (b)}] \quad If
 $h$ is not constant, then the function
 $$
 \mathbf z\ \mapsto \frac12 +\Rh (\mathbf z) 
 $$
 \quad
 changes sign.  That is, there exist two three-tuples of non-collinear points 
 
 \ $\mathbf z$ and $\mathbf z'$ such that
 $$
 \frac12 +\Rh (\mathbf z)\,  >\, 0\quad \text{and}\quad  \frac12 +\Rh (\mathbf z')\,  <\, 0.
 $$
\end{itemize}
Furthermore, 
{\tt (a)}
and {\tt (b)} are also true with $\displaystyle{\frac12 - \Rh}$ in place of $\displaystyle{\frac12 + \Rh}$.
\end{thm}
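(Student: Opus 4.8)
\textbf{Proof proposal for Theorem \ref{T:positive}.}

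The plan is to derive both parts from the representation \eqref{E:RH} together with the rigidity already available from Theorem \ref{T:L-infty}. Part {\tt (a)} is immediate: if $h$ is constant, then by the equivalence {\tt (ii)}$\Leftrightarrow${\tt (iii)} in Theorem \ref{T:L-infty} we have $\Rh \equiv 0$, so $\tfrac12 + \Rh \equiv \tfrac12 > 0$ and likewise $\tfrac12 - \Rh \equiv \tfrac12 > 0$. The substance is entirely in part {\tt (b)}, where $h$ is non-constant. By Theorem \ref{T:L-infty} we already know $\Rh$ is unbounded (it is not constant, hence by the theorem not bounded either). The first step is to leverage unboundedness to produce \emph{negative} values: since $\Rh$ is invariant under permutations of $\mathbf z$, for any admissible triple we may also evaluate on the triple with two vertices swapped; this reverses orientation, which in the formula \eqref{E:RH} amounts to replacing $\mathrm{Area}\,\Delta(\mathbf z)$ by its negative and re-examining the bracketed cosine sum. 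I expect that under a suitable transposition the bracket changes sign while the prefactor $2\ell_1\ell_2\ell_3/(4\mathrm{Area})^2$ is unchanged (it depends only on the unoriented triangle), so $\Rh$ takes both a large positive and a large negative value on triangles that are congruent up to relabeling. Because $\tfrac12$ is a fixed finite constant, having $\Rh$ attain values exceeding $\tfrac12$ in absolute value on both signs immediately gives triples $\mathbf z, \mathbf z'$ with $\tfrac12 + \Rh(\mathbf z) > 0$ and $\tfrac12 + \Rh(\mathbf z') < 0$; the same triples (or their relabelings) handle $\tfrac12 - \Rh$.

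If the crude "swap two vertices" argument does not by itself flip the sign of the bracket, the fallback is continuity: shrink the triangle. Fix a disc $\mathbb D_0$ on which $h$ is non-constant, pick $z^0$ a point of non-constancy, and consider triangles with all vertices near $z^0$ but with a fixed shape (so the $\theta_j$ and the ratios $\ell_i/\ell_j$ are held constant while the scale $\lambda = \mathrm{diam}\,\Delta \to 0$). In \eqref{E:RH} the prefactor scales like $\lambda^3/\lambda^4 = \lambda^{-1}$, while the bracket $\ell_1\cos(\s(z_1)-\tho) + \ell_2\cos(\s(z_2)+\ttw) - \ell_3\cos(\s(z_3)+\ttw-\tho)$ is $O(\lambda)$ in magnitude but, crucially, need not vanish to higher order unless the three phases $\s(z_j) = 2h(z_j) - 2\ato$ coincide — and by continuity of $h$ near $z^0$, if $h$ were infinitesimally constant the bracket would be $o(\lambda)$, forcing $\Rh$ bounded near $z^0$, contradicting the strengthened form of Theorem \ref{T:L-infty}. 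So the bracket is genuinely of order $\lambda$ with a nonzero coefficient, and $\Rh \sim c_0 \lambda^{-1}\cdot\lambda \cdot(\text{oscillation of }h)$ blows up; rotating the triangle (which shifts each $\s(z_j)$ by a common constant, since $\ato$ absorbs the rotation) or reflecting it lets us toggle the sign of the leading coefficient, again producing both signs of $\Rh$ with large magnitude.

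The main obstacle I anticipate is making precise the claim that the bracket in \eqref{E:RH} cannot conspire to vanish (or be too small) for \emph{all} choices of shape and location when $h$ is non-constant: one must rule out the possibility that the three cosine terms cancel identically. This is exactly the algebraic heart of Theorem \ref{T:L-infty}, so the cleanest route is to not re-prove it but to quote it — specifically its strengthened local form — and to package part {\tt (b)} as: $\Rh$ unbounded (from Theorem \ref{T:L-infty}) $+$ $\Rh$ odd under an orientation-reversing relabeling of $\mathbf z$ (from the structure of \eqref{E:RH}, since swapping the roles of two vertices sends $\mathrm{Area} \mapsto -\mathrm{Area}$ leaving $\mathrm{Area}^2$ fixed but flipping an odd number of the signed angle-contributions) $\Longrightarrow$ $\Rh$ takes arbitrarily large values of \emph{both} signs $\Longrightarrow$ $\tfrac12 \pm \Rh$ each change sign. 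The only real care needed is verifying the parity claim: that under the transposition witnessing the second admissible ordering of a triangle, the bracketed expression indeed reverses sign. Once that is checked by inspection of \eqref{E:RH} (tracking how $\tho, \ttw$ and the labels $\ell_1, \ell_2, \ell_3$ permute), the theorem follows with no further computation.
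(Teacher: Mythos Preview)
Your proposal contains a genuine gap at its central step. You correctly note that $\Rh$ is invariant under permutations of $\{z_1,z_2,z_3\}$ --- this is forced by the definition $\Rh(\mathbf z)=\mathtt S[\mathrm{Re}K_h](\mathbf z)/c^2(\mathbf z)-\tfrac12$, where both numerator and denominator are symmetric in the three points. But you then turn around and claim that ``under a suitable transposition the bracket changes sign'' so that $\Rh$ flips sign. These two statements are incompatible: if $\Rh$ is permutation-invariant, a relabeling of the same three points cannot change its value at all. The formula \eqref{E:RH} is only asserted for \emph{admissible} orderings; applying it to a non-admissible ordering simply gives a wrong number, not $-\Rh$. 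If instead you mean to \emph{reflect} the triangle in the plane (producing genuinely new points), then the values $h(z_j)$ change in an uncontrolled way, and there is no parity relation to exploit. Either reading of the ``sign-flip'' claim fails, so unboundedness of $|\Rh|$ from Theorem~\ref{T:L-infty} does not by itself yield large values of \emph{both} signs --- and that is exactly what you need, since $\tfrac12+\Rh$ changing sign requires $\Rh<-\tfrac12$ somewhere, while $\tfrac12-\Rh$ changing sign requires $\Rh>\tfrac12$ somewhere.

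Your fallback (shrinking triangles of fixed shape toward a point $z^0$) also does not work as stated. In the limit $\lambda\to0$ all three phases $\s(z_j)$ collapse to the common value $2h(z^0)-2\ato$, and the computation $C(\mathbf z)=D(\mathbf z)=0$ carried out in the proof of {\tt(iii)}$\Rightarrow${\tt(ii)} of Theorem~\ref{T:L-infty} shows precisely that the bracket in \eqref{E:RH} \emph{vanishes} when the three phases coincide. So the bracket is $o(\lambda)$, not merely $O(\lambda)$, and your scaling heuristic $\Rh\sim\lambda^{-1}\cdot\lambda$ does not produce a blow-up, let alone a sign you can toggle. The paper's proof avoids both pitfalls by a different, constructive route: it first isolates three collinear points $z_1,z_0,z_2$ along which $\cos\mathfrak h_{z_1,z_2}$ is strictly non-monotone (Lemma~\ref{L:Properties-3}), and then builds two explicit one-parameter families of thin triangles degenerating onto that line --- one family anchored at $z_0$ and stretched past $z_2$, the other anchored at $z_0$ and stretched past $z_1$ --- for which the limiting sign of $\tfrac12+\Rh$ is read off directly from the two inequalities in \eqref{E:P-4}.
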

\vskip0.1in
\begin{cor}\label{C:ReKh-positive}
 Suppose that $h:\mathbb C\to \mathbb R$ is continuous. Then
 \begin{itemize}
 \item[{\tt{(a)}}] $h$ is constant \quad $\iff\quad \frac12 +\Rh(\mathbf z) > 0$
 \vskip0.05in
\item[] for all three-tuples
 of  non-collinear points.
\vskip0.1in
\item[{\tt{(b)}}] $h$ is constant \quad $\iff\quad \frac12 -\Rh(\mathbf z) > 0$
\vskip0.05in
\item[] for all three-tuples
 of  non-collinear points.
 \end{itemize}
\end{cor}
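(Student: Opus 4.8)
The plan is to deduce Corollary \ref{C:ReKh-positive} directly from the combination of Theorem \ref{T:positive} and Corollary \ref{C:L-infty} (equivalently, Theorem \ref{T:L-infty}), since all the analytic work has already been done. First I would record the forward implication of part {\tt(a)}: if $h$ is constant, then $\Rh \equiv 0$ by Theorem \ref{T:L-infty}{\tt(iii)}$\Rightarrow${\tt(ii)}, hence $\tfrac12 + \Rh(\mathbf z) = \tfrac12 > 0$ for every three-tuple of non-collinear points; this is also exactly the content of Theorem \ref{T:positive}{\tt(a)}. For the reverse implication, I would argue by contraposition: if $h$ is not constant, then Theorem \ref{T:positive}{\tt(b)} produces a three-tuple $\mathbf z'$ of non-collinear points with $\tfrac12 + \Rh(\mathbf z') < 0$, so the inequality $\tfrac12 + \Rh(\mathbf z) > 0$ cannot hold for all such three-tuples. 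Combining the two directions gives the stated equivalence in {\tt(a)}.

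Part {\tt(b)} is handled identically, using the final sentence of Theorem \ref{T:positive}, which asserts that {\tt(a)} and {\tt(b)} remain valid with $\tfrac12 - \Rh$ in place of $\tfrac12 + \Rh$. Thus: if $h$ is constant then $\Rh \equiv 0$ and $\tfrac12 - \Rh(\mathbf z) = \tfrac12 > 0$ everywhere; conversely, if $h$ is non-constant, the "furthermore" clause of Theorem \ref{T:positive} supplies a non-collinear three-tuple at which $\tfrac12 - \Rh$ is negative, contradicting global positivity. No new computation is required; the corollary is a formal restatement packaging Theorem \ref{T:positive} as a clean biconditional.

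There is essentially no obstacle here, since everything rests on results already established. The one point that deserves a sentence of care is the logical bookkeeping: Theorem \ref{T:positive}{\tt(a)} only gives \emph{one} direction (constant $h$ $\Rightarrow$ positivity), and the converse is what genuinely needs the sign-change conclusion of {\tt(b)}; one should not conflate the two parts. It is also worth noting explicitly that the three-tuples $\mathbf z,\mathbf z'$ furnished by Theorem \ref{T:positive}{\tt(b)} are required to be non-collinear — which they are, by the statement of that theorem — so that the quantity $\Rh(\mathbf z')$ is well-defined via the representation \eqref{E:RH} and $c^2(\mathbf z') \neq 0$, keeping us safely inside the hypotheses under which $\Rh$ was defined. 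With these remarks in place the proof is a two-line deduction for each of {\tt(a)} and {\tt(b)}.
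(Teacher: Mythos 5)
Your proposal is correct and matches the paper's intent exactly: Corollary \ref{C:ReKh-positive} is an immediate consequence of Theorem \ref{T:positive}, with the forward implication given by part {\tt(a)} and the reverse by contraposition via part {\tt(b)}, and the ``Furthermore'' clause handling $\tfrac12 - \Rh$. The paper records no separate proof for this corollary precisely because it is the two-line deduction you describe.
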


\vskip0.1in

\subsection{Further results}\label{SS:further}
\noindent  As is well-known, the Cauchy transform is essentially
 self-adjoint. In fact if $\mathfrak C_0^{\ast}$ denotes the formal $L^2$-adjoint
 of $\mathfrak C_0$, then the ``dual'' kernel of $K_0$ (which is the
 kernel for $\mathfrak C_0^{\ast}$) is

 $$
   K_0^\ast(w, z) = -
 \overline{K_0(w, z)},
$$
 giving
  \begin{equation}\label{E:Ctr-self-adj}
 \mathtt S\,[K_0^\ast](\mathbf z)\ =\  \mathtt S\,[K_0](\mathbf z)\, .
  \end{equation} 
 \vskip0.05in
\noindent Thus 
 \begin{equation}\label{E:Ctr-self-adj-c}
 \mathtt S\,[K_0^\ast](\mathbf z) = c^2(\mathbf z)
  \end{equation} 
and the symmetrization estimates for 
$K_0^\ast$
  are synonymous with those for
  $K_0$.
 
\vskip0.1in
\noindent We define the dual kernel of $K_{h} (w, z)$ as 
\begin{equation}  K_h^{\ast} (w, z) = \overline{K_h(z, w)}. \label{def-Kh-star} \end{equation}  
Thus 
\[ K_h^{\ast} (w, z) = \frac{e^{-i h(z)}}{\overline{z} - \overline{w}}. \]
As the name suggests, if $K_{h}$ is the integration kernel of an integral operator, then $K_{h}^{\ast}$ is the kernel of its formal adjoint.  
\vskip0.1in
\noindent In great contrast with \eqref{E:Ctr-self-adj} and \eqref{E:Ctr-self-adj-c}, for non-constant $h$ the symmetrization identities and global estimates for $K_h^{\ast}$ turn out to be very different from those for $K_h$. While not directly related to the sequel \cite{LPr} of this paper, these results are of independent interest and
we state them below.
\vskip0.1in
\begin{prop}\label{P:symm-id-K-star} For any non-constant $h:\mathbb C\to \mathbb R$ and for any three-tuple $\mathbf z$ of non-collinear points
 in $\mathbb C$ we have
\begin{equation}\label{E:sym-adj}
  \mathtt S\,[K_h^\ast](\mathbf z) \ =\, 
c^2(\mathbf z)\, \H(\mathbf z)\,
\end{equation}
where $\H(\mathbf z)$ is a non-constant function of $\mathbf z$ that is 
  invariant under the permutations of the elements of $\mathbf z$. In particular, if $\mathbf z =(z_1, z_2, z_3)$ is admissible then
$\H (\mathbf z)$ has the following
 representation.
\begin{equation} \label{E:H}
\begin{aligned}
\H (\mathbf z) &=\frac{2\,\ell_1\ell_2\ell_3}{\left(4\mathrm{Area}\,\Delta(\mathbf z)\right)^2}\, \times 
\Bigl[
\ell_1\cos(h(z_2) - h(z_3) +\theta_1) + \\ 
&\hskip1in + \ell_2\cos (h(z_1)-h(z_3) - \theta_2)
+\ell_3\cos(h(z_1) -h(z_2) +\theta_3)
\Bigr]\, .
\end{aligned} 
\end{equation} 
Here $\theta_j$ and $\ell_j$ are as in the statement of Proposition \ref{P:symm-ids-Re-Im}. 
\end{prop}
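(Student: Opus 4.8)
\textbf{Proof proposal for Proposition \ref{P:symm-id-K-star}.}

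The plan is to compute $\mathtt S[K_h^\ast](\mathbf z)$ directly from the definition and organize the resulting sum using the admissible labeling scheme of Definition \ref{D:admissible}, following closely the strategy already used for Proposition \ref{P:symm-ids-Re-Im}. Writing $K_h^\ast(w,z) = e^{-ih(z)}/(\overline z - \overline w)$, the symmetrized form is $\sum_{\sigma\in S_3} K_h^\ast(z_{\sigma(1)},z_{\sigma(2)})\overline{K_h^\ast(z_{\sigma(1)},z_{\sigma(3)})}$. Each summand equals
\[
\frac{e^{-ih(z_{\sigma(2)})}}{\overline{z_{\sigma(2)}}-\overline{z_{\sigma(1)}}}\cdot \frac{e^{ih(z_{\sigma(3)})}}{z_{\sigma(3)}-z_{\sigma(1)}}
= \frac{e^{i(h(z_{\sigma(3)}) - h(z_{\sigma(2)}))}}{(\overline{z_{\sigma(2)}}-\overline{z_{\sigma(1)}})(z_{\sigma(3)}-z_{\sigma(1)})}.
\]
The key observation is that, unlike $\mathtt S[K_h]$, the phases do not cancel: the numerator retains a factor $e^{i(h(z_{\sigma(3)}) - h(z_{\sigma(2)}))}$ that depends on the permutation. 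The six permutations pair up into three conjugate pairs according to which vertex plays the role of $z_{\sigma(1)}$ (the ``apex''), so that $\mathtt S[K_h^\ast](\mathbf z) = 2\,\mathrm{Re}\sum_{k=1}^{3} T_k$, where $T_k$ is the summand with apex $z_k$.

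The second step is to convert each $T_k$ into trigonometric data of the triangle $\Delta(\mathbf z)$. Using $(\overline{z_j}-\overline{z_i})(z_l-z_i) = \overline{(z_j - z_i)}(z_l - z_i) = \ell_{?}\ell_{?}\,e^{i\phi}$ where the modulus is a product of two side lengths and $\phi$ is (up to sign) the interior angle at $z_i$ together with an orientation-dependent term, one rewrites each denominator in polar form. Here the admissibility hypothesis is exactly what fixes the signs of these angle contributions and the counterclockwise orientation, so that the arguments come out as $\pm\theta_k$ with the signs displayed in \eqref{E:H}; this is the same bookkeeping that produced \eqref{E:RH}, and I would cross-reference that computation rather than repeat it. Multiplying through by $c^2(\mathbf z)$ and using Melnikov's identity $c^2(\mathbf z) = 4\,\mathrm{Area}\,\Delta(\mathbf z)/(\ell_1\ell_2\ell_3)$ — equivalently $c(\mathbf z) = \ell_j/(2R\cdot\text{stuff})$, or most cleanly $c^2(\mathbf z)(\ell_1\ell_2\ell_3)^2 = (4\,\mathrm{Area}\,\Delta(\mathbf z))^2 \cdot (\text{something})$ — one matches the prefactor $2\ell_1\ell_2\ell_3/(4\,\mathrm{Area}\,\Delta(\mathbf z))^2$ in \eqref{E:H}. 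Taking real parts turns each $e^{i(\cdots)}$ into the corresponding cosine, yielding the three terms $\ell_1\cos(h(z_2)-h(z_3)+\theta_1)$, $\ell_2\cos(h(z_1)-h(z_3)-\theta_2)$, $\ell_3\cos(h(z_1)-h(z_2)+\theta_3)$.

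For the remaining assertions: permutation invariance of $\H(\mathbf z)$ is automatic because $\mathtt S[K_h^\ast]$ is a sum over all of $S_3$ and $c^2(\mathbf z)$ is symmetric, so $\H = \mathtt S[K_h^\ast]/c^2$ is symmetric wherever $c(\mathbf z)\neq 0$; the admissible representation \eqref{E:H} is then just one chart for this intrinsically symmetric object, and I would note that choosing a different admissible ordering permutes the labels consistently. That $\H$ is non-constant for non-constant $h$ can be extracted from \eqref{E:H} by the same kind of limiting argument used for $\Rh$ in Theorem \ref{T:L-infty}: degenerate the triangle (e.g. let one angle tend to $0$ or $\pi$, or shrink the triangle toward a point where $h$ is locally non-constant) and observe that the cosine combination cannot stay constant unless $h$ is. I expect the main obstacle to be purely organizational — getting every sign in the polar-form arguments correct under the admissibility convention, especially the interplay between the conjugation in $K_h^\ast$ and the orientation of $\Delta(\mathbf z)$ — rather than conceptual; once the admissible labeling of Section \ref{SS:LabelingScheme} is in hand, the computation is a controlled variant of the one establishing Proposition \ref{P:symm-ids-Re-Im}.
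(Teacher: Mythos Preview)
Your proposal is correct and follows essentially the same approach as the paper: both start from the symmetrized sum grouped as $2\sum_{j,\,k<l}\mathrm{Re}\bigl(e^{-i(h(z_k)-h(z_l))}(z_k-z_j)(\overline{z_l-z_j})\bigr)/(\ell_1^2\ell_2^2\ell_3^2)$, use the admissible labeling to express the side vectors via \eqref{E:trig-id-1}--\eqref{E:trig-id-2} (your ``polar form'' step), and then invoke \eqref{E:elem-trig} and \eqref{E:aux-one} to extract the prefactor $2\ell_1\ell_2\ell_3/(4\,\mathrm{Area}\,\Delta)^2$. The paper carries out the explicit substitution rather than arguing geometrically about the argument of $(\overline{z_k-z_j})(z_l-z_j)$, but this is a difference of presentation, not of method.
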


\vskip0.1in

 \begin{thm}\label{T:L-infty-H}
 Suppose that $h:\mathbb C\to \mathbb R$ is continuous.  The following are equivalent:
\begin{itemize}
\item[]
\item[{\tt (i)}]  \ \ 
There is  a constant $C<\infty$, possibly depending on $h$, such that $$|\H (\mathbf z)|\leq C$$
\ \ for any  three-tuple $\mathbf z=\{z_1, z_2, z_3\}$ of non-collinear points in $\mathbb C$.

\item[]
\item[{\tt (ii)}] \quad 
$\H (\mathbf z) \, =\, 1$ \, for any three-tuple
of non-collinear points in $\mathbb C$.
\item[]
\item[{\tt (iii)}] \quad
 $h$ is constant.
 \item[]
\end{itemize}
\end{thm}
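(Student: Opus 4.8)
\textbf{Proof plan for Theorem \ref{T:L-infty-H}.}
The plan is to establish the cycle of implications {\tt (iii)} $\Rightarrow$ {\tt (ii)} $\Rightarrow$ {\tt (i)} $\Rightarrow$ {\tt (iii)}, exactly as in the analysis of $\Rh$. The step {\tt (iii)} $\Rightarrow$ {\tt (ii)} is a direct computation: when $h$ is constant, the three cosine arguments in \eqref{E:H} collapse to $\cos\theta_1$, $-\cos\theta_2$, $\cos\theta_3$ after absorbing the constant $h(z_j)-h(z_k)=0$, and using the law of sines ($\ell_j = 2\rho \sin\theta_j$ where $\rho$ is the circumradius, so $c(\mathbf z) = 1/\rho$) together with $4\,\mathrm{Area}\,\Delta(\mathbf z) = 2\ell_1\ell_2\ell_3 c(\mathbf z)$, one reduces the bracket times the prefactor to a classical trigonometric identity for a triangle; the relation $\ell_1\cos\theta_1 - \ell_2\cos\theta_2 + \ell_3\cos\theta_3$ (with appropriate signs dictated by the admissible labeling) should simplify so that $\H(\mathbf z) \equiv 1$, recovering consistency with \eqref{E:Ctr-self-adj-c} and \eqref{E:same}. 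Alternatively, and more cleanly, one observes that for constant $h$ one has $K_h^\ast = e^{-ih}\overline{K_0(z,w)} = -e^{-ih}\overline{K_0(w,z)}$, so $\mathtt S[K_h^\ast] = |e^{-ih}|^2 \mathtt S[K_0] = c^2(\mathbf z)$, forcing $\H \equiv 1$ by \eqref{E:sym-adj}. The implication {\tt (ii)} $\Rightarrow$ {\tt (i)} is trivial with $C=1$.

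The substance of the theorem is {\tt (i)} $\Rightarrow$ {\tt (iii)}, and here I expect to reuse — essentially verbatim — the mechanism behind the analogous step for $\Rh$ in Theorem \ref{T:L-infty}. The strategy is contrapositive: assume $h$ is non-constant and continuous, and exhibit a sequence of non-collinear three-tuples $\mathbf z^{(n)}$ along which $|\H(\mathbf z^{(n)})| \to \infty$. The natural device is to let the triangle $\Delta(\mathbf z^{(n)})$ degenerate: from \eqref{E:H} the prefactor is
\[
\frac{2\,\ell_1\ell_2\ell_3}{(4\,\mathrm{Area}\,\Delta(\mathbf z))^2} \;=\; \frac{2\,\ell_1\ell_2\ell_3}{(2\ell_1\ell_2\ell_3\, c(\mathbf z))^2} \cdot c^2(\mathbf z)\cdot(\cdots),
\]
so after cancellation $\H(\mathbf z)$ behaves like $c^2(\mathbf z)$ times the bracketed cosine sum divided by side-length factors; as the triangle collapses to a segment, $c(\mathbf z)\to\infty$ while the bracket does \emph{not} vanish to matching order unless the three cosine terms conspire to cancel. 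Since $h$ is non-constant, pick points $p \ne q$ with $h(p)\ne h(q)$; place $z_1, z_2$ near $p$ and $z_3$ near $q$ (or some such configuration respecting admissibility), and shrink the $p$-cluster so the triangle degenerates. Continuity of $h$ keeps the cosine arguments close to fixed values $h(p)-h(q)+\text{angle}$, and one checks the bracket stays bounded away from $0$ while the geometric prefactor blows up — giving unboundedness of $\H$. One must also verify that this blow-up cannot be evaded: if $\H$ \emph{were} bounded, running the degeneration along all directions would force, for every pair $p\ne q$, a cancellation identity among $h(p)$, $h(q)$ and limiting angles, and this rigidity should propagate (via continuity and varying the degeneration direction) to force $h$ constant.

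\textbf{Main obstacle.} The delicate point is the bookkeeping of the \emph{admissible relabeling}: the representation \eqref{E:H} is valid only for admissible $\mathbf z$, and as one moves a three-tuple through a degenerating family the admissible ordering may jump (condition {\tt(a)} — the foot of the altitude lying inside the base — can fail). So I would either (i) choose the degeneration so that a single admissible labeling persists throughout — e.g. collapse the triangle by pushing one vertex toward the \emph{midpoint} of the opposite side's perpendicular region, keeping the altitude foot interior — or (ii) work with the permutation-invariant quantity $\H$ directly and only invoke \eqref{E:H} after re-sorting, tracking how the three cosine terms permute. A secondary subtlety is ensuring the chosen clusters keep all three points distinct and non-collinear while $h$ genuinely separates them; this uses only continuity of $h$ and the hypothesis that $h$ is non-constant on the relevant disc, matching the strengthened local version noted after Theorem \ref{T:L-infty}. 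Modulo these labeling details — which I expect to handle exactly as in the proof of Theorem \ref{T:L-infty}, since \eqref{E:H} and \eqref{E:RH} have the identical structure — the argument is routine.
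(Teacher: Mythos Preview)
Your handling of {\tt (iii)} $\Rightarrow$ {\tt (ii)} is fine (and your alternate route via $\mathtt S[K_h^\ast]=\mathtt S[K_0]$ for constant $h$ is in fact cleaner than the paper's direct computation). The implication {\tt (ii)} $\Rightarrow$ {\tt (i)} is of course trivial.

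The gap is in {\tt (i)} $\Rightarrow$ {\tt (iii)}, and it is not merely the admissible-labeling bookkeeping you flag. Two concrete problems:

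\emph{First}, your proposed degeneration --- clustering $z_1,z_2$ near $p$ with $z_3$ near $q$ --- does not produce blow-up. With $a:=h(p)-h(q)$ and $\theta_2=\pi-\theta_1-\theta_3$, one has $\cos(a-\theta_2)=-\cos(a+\theta_1+\theta_3)$, so the first two cosine terms in the bracket of \eqref{E:H} cancel to leading order; expanding to first order in $\theta_3$ shows the bracket is $O(\theta_3)$, exactly matching the $1/\theta_3$ blow-up of the prefactor. Thus $\H$ stays bounded along this family and no useful constraint emerges. The paper instead degenerates to \emph{three distinct collinear points} $0,\,z_3,\,\beta z_3$ (with $\beta>1$), by sending an off-axis vertex $z_1(\theta,\beta)$ to $\beta z_3$ as $\theta\to 0$.

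\emph{Second}, and more seriously, the functional equation one extracts from that correct degeneration is \emph{not} the affine-in-$\beta$ identity \eqref{E:temp-2} you know from the $\Rh$ proof. It is
\[
\cos\bigl(h(z_3)\bigr)+(\beta-1)\cos\bigl(h(\beta z_3)-h(z_3)\bigr)-\beta\cos\bigl(h(\beta z_3)\bigr)=0,
\]
where $h(\beta z_3)$ appears inside two different cosines. The ``bounded affine function'' trick from Theorem~\ref{T:L-infty} no longer applies. The paper resolves this by rewriting the identity as a quadratic in $Y_\beta:=\cot\bigl(h(\beta z_3)/2\bigr)$, solving it explicitly, sending $\beta\to\infty$, and showing that the two possible limits lie in the set $\mathcal A_{z_3}:=\{\cot(h(z_3)/4),\,-\tan(h(z_3)/4)\}$. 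A preparatory lemma then furnishes two points $z_3=sz_0$ and $z_3'=tz_0$ on the same ray with $\mathcal A_{sz_0}\cap\mathcal A_{tz_0}=\emptyset$, yielding a contradiction since the limit along the common ray must agree. None of this machinery is present in the $\Rh$ argument, so your expectation that the proof is ``essentially verbatim'' that of Theorem~\ref{T:L-infty} is mistaken; the structural difference is that \eqref{E:H} involves \emph{differences} $h(z_j)-h(z_k)$ rather than the individual shifted values $\mathfrak h_{z_1,z_2}(z_j)$ appearing in \eqref{E:RH}.
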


\section{Background}\label{S:Background}
\noindent Here we list without proof a few elementary computational tools and basic facts.

\subsection{Basic properties of symmetrized forms} \label{SS:Basic}
Recall that the  symmetrized form  considered in \cite{LPr} is
$$
\mathtt S\,[K](\mathbf z) := \sum\limits_{\sigma\in S_3}
  K(z_{\sigma(1)},\, z_{\sigma(2)})\, 
  \overline{K(z_{\sigma(1)},\, z_{\sigma(3)})}
$$
where $S_3$ is the set of all permutations of $\{1, 2, 3\}$ and  $\mathbf z$ is any three-tuple of points in $\mathbb C$ for which the above expression is meaningful.
\begin{itemize}
\vskip0.1in
\item It is easy to see that the above can be equivalently expressed as
\begin{equation}\label{E:Symm-simpl1} 
\mathtt S\,[K](\mathbf z) =   2\,\sum\limits_{\sigma\in S'_3}
  \mathrm{Re}\big(K(z_{\sigma(1)},\, z_{\sigma(2)})\, 
  \overline{K(z_{\sigma(1)},\, z_{\sigma(3)})}\big)
  \end{equation}
  where $S'_3 =\{123, 213, 312\}$. 
  \vskip0.1in
  \item It follows that  $\mathtt S\,[K](\mathbf z)$ is real-valued and that
  \begin{equation}\label{E:Symm-cplx-conj} 
 \mathtt S\,[\overline{K}](\mathbf z)\,=\, \overline{\mathtt S\,[K](\mathbf z)}\,=\, \mathtt S\,[K](\mathbf z)
   \end{equation}
  \vskip0.1in
  \item The symmetrized form of $K(w, z)$ can also be expressed as
  \begin{equation}\label{E:Symm-simpl2}
  \mathtt S\,[K](\mathbf z)=
  2\,\sum\limits_{\stackrel{j}{k<l}}
  \mathrm{Re}\big( K(z_j,\, z_k)\, 
  \overline{K(z_j,\, z_\ell)}\big)\, ,
\end{equation} 
where it is understood that $j\in\{1, 2, 3\}$ and for each fixed $j$, the remaining labels $k, \ell\in\{1, 2, 3\}$ are displayed so that $k<\ell$. 
\vskip0.1in
\item In particular, if $A(w, z)$ is {\em real-valued}, then we have
 \begin{equation}\label{E:Symm-simpl3}
  \mathtt S\,[A](\mathbf z)=
  2\sum\limits_{\stackrel{j}{k<l}}
  A(z_j,\, z_k)\, 
  A(z_j,\, z_\ell)
\end{equation} 
\vskip0.1in
\item 
 The operation $K\mapsto\mathtt S[K]$ is not linear: 
 $$\mathtt S[K+H]\neq \mathtt S[K] + \mathtt S[H]\quad \text{and}\quad \mathtt S[aK]\neq a\mathtt S[K].$$ However 
if the kernels $A$ and $B$ are {\em real-valued} then
$$
\mathtt S[A\pm iB] = \mathtt S[A]\ +\ \mathtt S[B]\, .
$$
Thus
\begin{equation}\label{E:Symm-real-im}
\mathtt S[K] = \mathtt S[\text{Re}K]\ +\ \mathtt S[\mathrm{Im}K].
\end{equation}
\vskip0.1in
\item For the kernels $\{K_h\}_h$ defined in \cite{LPr},
 whenever convenient we will write
 \begin{equation}\label{E:K-simpl}
\Kh (w, z) \ =\  
\frac{e^{i h(w)}(\bar w-\bar z)}{|w-z|^2}
\end{equation}
and in particular
\begin{equation}\label{E:ReK-simpl}
\mathrm{Re}\,\Kh (w, z) = \frac{\mathrm{Re}\left(e^{-i h(w)}(w-z)\right)}{|w-z|^2}
\end{equation}
with a corresponding identity for the imaginary part.
\vskip0.1in
\item Finally, we point out that for the kernels $K_h$ we have
\begin{equation}\label{E:K-simpl1}
 \mathtt S\,[K_h\,](\mathbf z) \, =\, \mathtt S\,[K_{h+c}\,](\mathbf z)
\quad\text{for any constant}\ c\in\mathbb R\, .
\end{equation}
\vskip0.1in
\end{itemize}
\subsection{A brief review of Menger curvature} The Menger curvature associated to any
three element set $\{a, b, c\}$ of distinct points in $\mathbb C$, 
denoted $c(\mathbf z)$, is the reciprocal of the radius of the circle passing through 
those  points (with the understanding that if the points are  collinear, then
 $c(\mathbf z)=0$). Suppose now that the three points are not collinear and consider the triangle $\Delta (\mathbf z)$ with vertices $\{a, b, c\}$, which we describe as follows:  for each $j\in\{a, b, c\}$ we
   denote the angle at $j$ by $\theta_j$, while $\ell_j$ denotes the length of the side opposite to $j$, that is: $\ell_j= |z_l-z_k|$ where
 $\{ j, k, l\} = \{a, b, c\}$. See Figure \ref{fig:Figure1} below.
  \begin{figure}[h!]\label{Figure 1}
    \centering

  \begin{tikzpicture}

    \draw [thin] (0,0)--(4,4);
    \draw [thin] (4,4) -- (6,3);
    \draw [thin] (0,0) -- (6,3);

    \draw (0.8,0.8) arc (45:20:0.8cm);
    \draw (3.7,3.7) arc (-135:-24.5:0.4cm);
    \draw (5.6,5.6/2) arc (-160:-205:0.5cm);

    \draw (0,0) node[below] {$c$};
    \draw (4,4) node[above] {$a$};
    \draw (6,3) node[right] {$b$};

    \draw (2.5,2.5) node[above left] {$\ell_b$};
    \draw (5,3.5) node[above right] {$\ell_c$};
    \draw (3,1.5) node[below right] {$\ell_a$};

    \draw (0.9,0.9) node[right] {$\theta_c$};
    \draw (5.6,3) node[left] {$\theta_b$};
    \draw (4,3.6) node[below] {$\theta_a$};

 \end{tikzpicture}
\caption{{\em Labeling system for vertices, side-lengths and angles of a triangle.}}
\label{fig:Figure1}
  \end{figure}
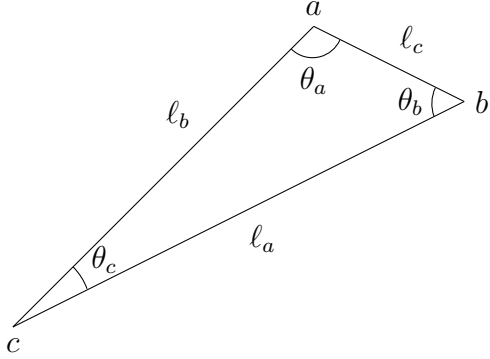

\vskip0.05in  
 \noindent We recall a few basic identities that relate $c(\mathbf z)$ and $\Delta (\mathbf z)$, see e.g. \cite{Verd}:

 \begin{equation}\label{E:Msin-1}
 c(\mathbf z)\,=\, 2\,\frac{\sin\theta_j}{\ell_j}\, , \quad j\in\{a, b, c\},\quad \text{so\ in \ particular}
 \end{equation}
 
  \begin{equation}\label{E:Msin-2}
\text{(Law of sines)} \quad \frac{\sin \theta_a}{\ell_a} = \frac{\sin \theta_b}{\ell_b} = \frac{\sin \theta_c}{\ell_c};
 \end{equation}
 \begin{equation}\label{E:Mcos-1}
 2\,\frac{\cos\theta_j}{\ell_k\ell_l}\, =\,  c^2(\mathbf z)\, 
 \frac{\sin 2\theta_j}{4\sin\theta_a\sin\theta_b\sin\theta_c}\, ,\qquad j\in\{a, b, c\}
 \end{equation}
 with the understanding that for any given $j=a, b, c$, one takes $l$ and $k$ to be the other two labels in $\{a, b, c\}$,
  and it follows from \eqref{E:Mcos-1} that
 
 \begin{equation}\label{E:Mcos-2}
 \frac{\cos\theta_a}{\ell_b\ell_c} +
 \frac{\cos\theta_b}{\ell_a\ell_c} +
 \frac{\cos\theta_c}{\ell_a\ell_b}
 \ = \
 \frac{c^2(\mathbf z)}{2}\, ,
 \end{equation}
 because
  $
  \sum\limits_{j\in\{a, b, c\}}\!\!\sin 2\theta_j= 4\sin \theta_a\sin\theta_b\sin\theta_c$, 
  see again Figure \ref{fig:Figure1}. We also have
    \begin{equation}\label{E:Marea-1}
 c(\mathbf z)\,=\, \frac{4\,\text{Area}(\Delta (\mathbf z))}{\ell_a\ell_b\ell_c},
 \end{equation}
 which immediately leads us to
 \begin{equation}\label{E:aux-one}
c^2(\mathbf z)\frac{\ell_a\ell_b\ell_c}{(4\,\mathrm{Area}\,\Delta (\mathbf z))^2}\ =\ \frac{1}{\ell_a\ell_b\ell_c}\, .
\end{equation}
\vskip0.1in

  \subsection{Labeling scheme for $\D (\mathbf z)$}\label{SS:LabelingScheme} In representing $\Rh(\mathbf z)$, see 
  \cite[Proposition 5.2]{LPr}, it will be convenient to re-label the vertices $\{a, b, c\}$
  as, say, $\{z_1, z_2, z_3\}$ where $z_3$ is any vertex whose orthogonal projection onto the line
  determined by the two other vertices falls {\em into} the side of $\D(\mathbf z)$ that is opposite to $z_3$ and, furthermore, $z_1$ and $z_2$ are labeled so that the ordered three-tuple $(z_1, z_2, z_3)$ has positive (counterclockwise) orientation. We also relabel the angles $\theta_j$ and the sides $\ell_j$ accordingly. We recall from 
\cite{LPr}   that such labels are called {\em admissible}, see Figure 2 below. All subsequent formulae appearing in this section refer to admissible ordered three-tuples $(z_1, z_2, z_3)$.
  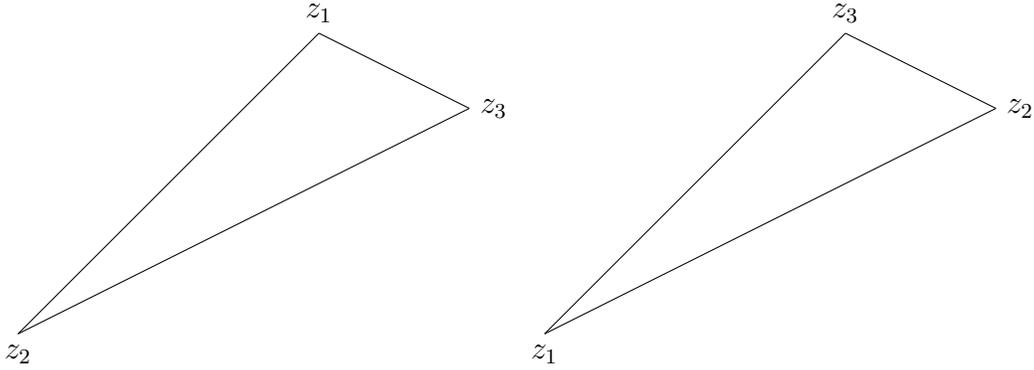
\begin{figure}[h!]\label{Figure 2}
     \centering

       \begin{tikzpicture}

    \draw [thin] (0,0)--(4,4);
    \draw [thin] (4,4) -- (6,3);
    \draw [thin] (0,0) -- (6,3);

    \draw (0,0) node[below] {$z_2$};
    \draw (4,4) node[above] {$z_1$};
    \draw (6,3) node[right] {$z_3$};

    \draw [thin] (7,0)--(11,4);
    \draw [thin] (11,4) -- (13,3);
    \draw [thin] (7,0) -- (13,3);

    \draw (7,0) node[below] {$z_1$};
    \draw (11,4) node[above] {$z_3$};
    \draw (13,3) node[right] {$z_2$};

 \end{tikzpicture}
\caption{{\em A triangle in non-admissible
form (left), and in admissible form (right).}}
\label{fig:Figure2}
\end{figure} 
   
\noindent Since the orthogonal projection of $z_3$ onto the line
  determined by $z_1$ and $z_2$ occurs at a point $z_4$ that lies {\em into} the side of $\D(\mathbf z)$ that is opposite to $z_3$,
   we have that
  $$
  z_4\ =\ z_1 + (z_2-z_1)\beta\ =\ z_2 + (z_1-z_2)(1-\beta)\quad \text{for some}\quad
  0<\beta<1.
  $$
  From these it follows that
  $$
  z_3-z_4\ =\ i\,\beta\tan\theta_1\, (z_2-z_1)\ =\ 
  i\, (1-\beta)\tan\theta_2\, (z_2-z_1)\, ,
  $$
  which in turn grant
  \begin{equation}\label{E:trig-id-1}
  z_2-z_3 = \frac{(1-\beta)}{\cos \theta_2}\,e^{-i\theta_2}(z_2-z_1)
  \end{equation}
  and
  \begin{equation}\label{E:trig-id-2}
  z_3-z_1 =\frac{\beta}{\cos\theta_1}\,e^{i\theta_1}(z_2-z_1)\, .
  \end{equation}
Finally, we recall for future reference that
\begin{equation}\label{E:elem-trig}
\ell_2\cos\theta_1 = |z_1-z_4|=\beta\,|z_2-z_1|=\beta\ell_3\, ,
\qquad\text{and}\qquad
\ell_1\cos\theta_2 = (1-\beta)\ell_3\, .
\end{equation}
\vskip0.1in
 \section{Proofs}\label{S:proofs-main}

 \subsection{Proof of  Proposition \ref{P:symm-ids-Re-Im}}
  \label{SS:Symm-ids-pf}
On account of \eqref{E:Symm-real-im} we only need to prove
 the symmetrization identity for $\mathrm{Re}\,\Kh$. To this end we start with
\begin{equation}\label{E:ReImSym1}
{\tt S}[\,\mbox{Re}\,K_h\,](\mathbf z) =
\frac{2}{\lo^2\,\ltw^2\,\lth^2}
\!
\sum\limits
_{\stackrel{j}{k<l}}
 \ell^2_j
\,
\mbox{Re}\,\big(e^{-ih(z_j)}(z_j-z_k)\big)\,
\mbox{Re}\,\big(e^{-ih(z_j)}(z_j-z_l)\big)\, .
\end{equation}
  We expand the sum and again use \eqref{E:trig-id-1} and \eqref{E:trig-id-2} to express $(z_j-z_k)$ and
  $(z_j-z_\ell)$ in terms of $(z_2-z_1)$. Writing
  $$
  z_2-z_1\ =\ \ell_3\, e^{i\alpha_{21}}
  $$
   we are led to
  $$
  {\tt S}[\,\mbox{Re}\,K_h\,](\mathbf z) =
   \frac{\beta}{\ell_2^2\cos\theta_1}\, \Big[2\cos \!\big(h(z_1)-\alpha_{21}\big)
\cos\!\big(h(z_1)-\alpha_{21}-\theta_1\big)\Big]\ +\ 
  $$
  $$
   \frac{(1-\beta)}{\ell_1^2\cos\theta_2}\, \Big[2\cos\!\big(h(z_2)-\alpha_{21}\big)
\cos\!\big(h(z_2)-\alpha_{21}+\theta_2\big)\Big]\ +\
$$
$$
 -\ \ell_3^2\,\frac{\beta}{\ell_2^2\cos\theta_1}\, \frac{(1-\beta)}{\ell_1^2\cos\theta_2}\, \Big[2\cos\!\big(h(z_3)-\alpha_{21}-\theta_1\big)
\cos\!\big(h(z_3)-\alpha_{21}+\theta_2\big)\Big]\,.
  $$
  \vskip0.1in
 \noindent Applying the formula $$2\cos\gamma\cos\lambda = \cos(\gamma+\lambda) +
 \cos(\gamma-\lambda)$$
 to each of the three summands (for appropriate choices of $\gamma$ and $\lambda$)
 and recalling our definition $\s (z) := 2(h(z)-\alpha_{21}),$ we obtain
$$
  {\tt S}[\,\mbox{Re}\,K_h\,](\mathbf z) =
  $$
  $$=\
   \frac{\beta}{\ell_2^2\cos\theta_1}\big[
   \cos \!\big(\s(z_1)-\theta_1\big)+
\cos\theta_1
\big]\ +\ 
   \frac{(1-\beta)}{\ell_1^2\cos\theta_2}\big[
   \cos\!\big(\s(z_2)+\theta_2\big) +
\cos\theta_2\big]\ +
$$
$$
 -\ \ell_3^2\,\frac{\beta}{\ell_2^2\cos\theta_1}\, \frac{(1-\beta)}{\ell_1^2\cos\theta_2}
 \big[
 \cos\!\big(\s(z_3)+\theta_2-\theta_1\big) -\cos\theta_3\big]\, .
  $$
  \vskip0.1in
 \noindent On account of \eqref{E:elem-trig},
the expression above is reduced to
  $$
   \frac{1}{\ell_2\,\ell_3}\big[
   \cos \!\big(\s(z_1)-\theta_1\big)+
\cos\theta_1
\big]\ +\ 
   \frac{1}{\ell_1\,\ell_3}\big[
   \cos\!\big(\s(z_2)+\theta_2\big) +
\cos\theta_2\big]\ +
$$
$$
 -\ \frac{1}{\ell_2\,\ell_1}
 \big[
 \cos\!\big(\s(z_3)+\theta_2-\theta_1\big) -\cos\theta_3\big]\, .
  $$
Applying \eqref{E:Mcos-2} we are led to
$$
  {\tt S}[\,\mbox{Re}\,K_h\,](\mathbf z) = \frac{c^2(\mathbf z)}{2}\ +
  $$
  $$+\
  \frac{1}{\ell_1\ell_2\ell_3}\Big[\ell_1\cos \!\big(\s(z_1)-\theta_1\big)+  \ell_2\cos\!\big(\s(z_2)+\theta_2\big) -
  \ell_3\cos\!\big(\s(z_3)+\theta_2-\theta_1\big)
  \Big]\, .
$$
\vskip0.05in
\noindent The symmetrization identity for $\mathrm{Re} K$ now follows from \eqref{E:aux-one}.
The proof is concluded.
\vskip0.1in
\noindent  A word on the well-posedness of the definition of $\Rh (\mathbf z)$ is in order. If the triangle
   $\D(\mathbf z)$ has an obtuse or right angle, then $\Rh (\mathbf z)$ is unambiguously defined 
   in the sense that there is a unique admissible form of $\mathbf z$ (there is a unique permutation of $\{z_1, z_2, z_3\}$ that gives the admissible form of $\mathbf z$).
    On the other hand, if $\D(\mathbf z)$ is an acute-angle triangle (all three angles in $\D(\mathbf z)$ are acute) then there are three distinct admissible orderings of $\mathbf z$ because $z_3$ can be assigned to be {\em any one} of
   the three vertices $a$, $b$ or $c$. Correspondingly there are three
   formulations of $\Rh (\mathbf z)$: these, however, must be identical to one another in view of the invariance of  $\Rh (\mathbf z)$
   under the permutations of $\{z_1, z_2, z_3\}$.
      Alternatively, 
  one can directly verify that the three admissible forms of $\mathbf z$ lead to the same representation for 
  $\Rh (\mathbf z)$ by invoking
   the following lemma, whose proof is omitted:
   \begin{lem}\label{L:trig-2} Let $\D(\mathbf z)$ be any triangle with vertices $\{a, b, c\}$, see Figure 1. For any $j, k\in \{a, b, c\}$ let $\alpha_{jk}\in [0, 2\pi)$ denote the argument
   of $j-k$ (in an arbitrarily fixed coordinate system for $\mathbb R^2$). Then, with the notation of Figure 1, we have
   \begin{equation}\label{E:trig-abc}
\alpha_{ac} = \alpha_{ba} + \theta_a + \pi \ \ mod\, (2\pi);\quad
\alpha_{bc} = \alpha_{ba} - \theta_b + \pi \ \ mod\, (2\pi)\, .
   \end{equation}
   \end{lem}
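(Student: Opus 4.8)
\emph{Sketch of a proof of Lemma~\ref{L:trig-2}.} The statement is pure planar trigonometry, and the tidiest way to organize the signs is to pass to complex arguments, using only the two elementary facts that, for nonzero $u,v\in\mathbb C$,
\[
\arg u-\arg v\ \equiv\ \arg\!\bigl(u/v\bigr)\pmod{2\pi}
\qquad\text{and}\qquad
\arg(-u)\ \equiv\ \arg u+\pi\pmod{2\pi}.
\]

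The plan is to obtain each of the two identities in \eqref{E:trig-abc} by reducing it to a single oriented angle of the triangle $\D(\mathbf z)$. For the first one I would write
\[
\alpha_{ac}-\alpha_{ba}\ \equiv\ \arg\!\Bigl(\frac{a-c}{b-a}\Bigr)\ \equiv\ \pi+\arg\!\Bigl(\frac{c-a}{b-a}\Bigr)\pmod{2\pi},
\]
the additional $\pi$ coming from the sign flip $a-c=-(c-a)$. Now the complex number $(c-a)/(b-a)$ has modulus $|c-a|/|b-a|=\ell_b/\ell_c$ and argument equal to the oriented angle at the vertex $a$, swept from the direction of $b-a$ to the direction of $c-a$; under the orientation convention fixed by the labeling scheme of Section~\ref{SS:LabelingScheme} (cf.\ Figure~\ref{fig:Figure1}), that oriented angle is the interior angle $\theta_a$, which gives the first line of \eqref{E:trig-abc}. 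For the second line I would run the same computation at the vertex $b$, using that $(b-c)/(b-a)=(c-b)/(a-b)$ so that its argument is the oriented angle at $b$ swept from the direction of $a-b$ to that of $c-b$; the orientation convention identifies this with a signed copy of $\theta_b$, and collecting this together with the appropriate instance of $\arg(-u)\equiv\arg u+\pi$, and normalizing every argument to lie in $[0,2\pi)$, produces the second line of \eqref{E:trig-abc}.

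In practice I would first rotate the coordinate system so that $b-a$ becomes a positive real number — this is harmless, since both sides of each identity in \eqref{E:trig-abc} change by the same additive constant under a rigid rotation — after which $c$ lies in a fixed half-plane and each oriented angle can be read off directly from the triangle. The one genuinely delicate point, and the step I expect to be the main obstacle, is exactly the bookkeeping of arguments modulo $2\pi$: one has to be sure that each oriented angle equals the interior angle $\theta_j$ itself and not $-\theta_j$, $\pi-\theta_j$, or $\theta_j-\pi$, which comes down to knowing on which side of the line through $a$ and $b$ the third vertex lies — and that is precisely the datum recorded by the orientation convention of Section~\ref{SS:LabelingScheme}. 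Once that is settled, the remaining steps are routine algebra with arguments.
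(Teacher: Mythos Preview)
The paper explicitly omits the proof of this lemma (``the following lemma, whose proof is omitted''), so there is no argument in the text to compare yours against. Your approach via complex arguments is the standard one and is sound: rewriting $\alpha_{ac}-\alpha_{ba}$ as $\pi+\arg\bigl((c-a)/(b-a)\bigr)$ and then identifying that quotient's argument with the signed interior angle at $a$ (and similarly at $b$) is exactly how such relations are established, and you are right that the only genuine content is the orientation bookkeeping that decides between $\theta_j$ and $-\theta_j$. One small caveat: you appeal to the admissibility convention of Section~\ref{SS:LabelingScheme} to fix the orientation, but the lemma is stated for the generic labeling of Figure~\ref{fig:Figure1}, not for an admissible triple; the orientation that makes the signs in \eqref{E:trig-abc} come out as written has to be read off from that figure directly.
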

\vskip0.1in
\noindent {\bf Remark (B).}
Note that $\Rh (\mathbf z)$ is meaningful only when the points in the three-tuple $\mathbf z=\{z_1, z_2, z_3\}$ are non-collinear (so that
Area$(\Delta (\mathbf z))\neq 0$). 
However  the set of non-collinear three-tuples in 
$\mathbb C$ viewed as a subset of $\mathbb C^3$ has full Lebesgue measure
because the condition that Area$(\Delta (\mathbf z))= 0$ is equivalent to
\begin{equation}\label{E:algebraic}
\vec{u}\times\vec{v}=\vec{0}
\end{equation}
with $\times$ denoting the cross product in $\mathbb R^2$ of the vectors 
$\vec{u} := z_2-z_1$ and $\vec{v} := z_3-z_1$.
  Since
 \eqref{E:algebraic} is a quadratic equation in the variables $(z_1$, $z_2$, $z_3)$,  its solution set 
 has zero Lebesgue measure in $\mathbb C^3$ (it is an algebraic subvariety of $\mathbb C^3$);  
 on the other hand, the product
$$c^2(\mathbf z )\Rh (\mathbf z)\, $$
 is meaningful as soon as the points $\{z_1, z_2, z_3\}$ are distinct from one another because, on account  \eqref{E:aux-one}, it equals
$$
\frac{1}{\ell_1\ell_2\ell_3}
\,\times\, \mathrm{(a\ continuous\ function\ of}\  \mathbf z\, \mathrm)\, ,
$$
and $\ell_j\neq 0$ for each $j= 1, 2, 3$  (since the $z_j$'s are distinct).

\subsection{Proof of 
Theorem \ref{T:L-infty}} 
 Note that the implication {\tt (ii)} $\Rightarrow$ {\tt (i)} is trivial, thus it suffices to show
that
 \medskip
 
 \centerline{{\tt (iii)} $\Rightarrow$ {\tt (ii)}\quad  and\quad   {\tt (i)} $\Rightarrow$ {\tt (iii)}.}
\vskip0.07in

\noindent We begin by proving that {\tt (iii)} $\Rightarrow$ {\tt (ii)}. We claim that

\medskip

\centerline{ {\tt (iii)} $\ \Rightarrow\  
c^2(\mathbf z)\Rh (\mathbf z) =0$ 
(which immediately implies {\tt (ii)}).}
\medskip
\noindent To see this, let $c_0$ denote the assumed constant value of $h(z)$; we use the short-hand notation
$$
\alpha = 2c_0 - 2\alpha_{2 1}
$$
(note that $\alpha = \alpha (z_1, z_2)$ because $\alpha_{21}$ is a function of $z_1$ and $z_2$).
 Combining  Proposition \ref{P:symm-ids-Re-Im}
  the basic identity \eqref{E:Mcos-2} we see that  
$$
c^2(\mathbf z)\, \Rh(\mathbf z) =
 \frac{1}{\ltw\lth}\cos\!\big(\alpha -\theta_1\big) +
\frac{1}{\lo\lth}\cos\!\big(\alpha +\theta_2\big) -
\frac{1}{\lo\ltw}\cos\!\big(\alpha+ \theta_2-\theta_1\big)\,
$$
and thus
$$
c^2(\mathbf z)\, \Rh(\mathbf z) = C (\mathbf z)  \cos\alpha \ +\ D (\mathbf z) \sin\alpha\, 
$$
where
$$
C (\mathbf z) := 
\frac{\cos\theta_1}{\ltw\lth} +
\frac{\cos\theta_2}{\lo\lth} -
\frac{1}{\lo\ltw}\cos(\theta_2-\theta_1)
$$
and
$$
D (\mathbf z) := 
\frac{\sin\theta_1}{\ltw\lth} -
\frac{\sin\theta_2}{\lo\lth} +
\frac{1}{\lo\ltw}\sin(\theta_2-\theta_1).
$$
We claim that
$$
C (\mathbf z)=  0\, ,\quad \text{and}\quad D (\mathbf z)=  0\, .
$$
Indeed, it follows from \eqref{E:Mcos-2}, \eqref{E:Msin-1} and the identity 
$\cos\theta_3 = -\cos(\theta_1+\theta_2)$,  that
$$
C (\mathbf z) = \frac12\, c^2(\mathbf z)\, - \left(\frac{\cos\theta_3 + \cos (\theta_2-\theta_1)}{\lo\,\ltw}\right) =
$$
$$
\quad = \frac12 c^2(\mathbf z)\ -\ 2\frac{\sin\theta_1}{\lo}\frac{\sin\theta_2}{\ltw} =\
\frac12\, c^2(\mathbf z)\ -\ \frac12\, c^2(\mathbf z)\ =\ 0.
$$
To deal with $D(\mathbf z)$ we invoke \eqref{E:Msin-2} and the identity $\sin\theta_3 = \sin(\theta_1+\theta_2)$, which lead to
$$
D (\mathbf z) = \frac{1}{\lo\ltw\sin\theta_3}\,
\big(\sin^2\theta_1-\sin^2\theta_2 + \sin(\theta_2+\theta_1)\sin(\theta_2-\theta_1)\big)\ =\ 0.
$$
The proof of the implication: {\tt (iii.)} $\Rightarrow$ {\tt (ii.)} is concluded.
\bigskip

\noindent Next we prove that {\tt (i)}$\Rightarrow${\tt (iii)}.\quad We show that if $h$ is continuous and non-constant then the inequality 
\begin{equation}\label{E:impossibleRh}
|\Rh (\mathbf z)|\leq C\quad \text{for any non-collinear three-tuple}\ \mathbf z
\end{equation}
is impossible. Fix a three-tuple $\mathbf z=\{0, z_2, z_3\}$ of non-collinear points with the property that the triangle $\Delta (0, z_2, z_3)$ has an obtuse angle at $z_3$, see Figure
 \ref{fig:Figure1}. Let $z_4$ denote the orthogonal projection of $z_3$ onto the opposite side of $\Delta (0, z_2, z_3)$, thus $z_4=\beta z_2$ with $0< \beta< 1$. Consider the family of triangles $\Delta \big(0, z_2, \zt (\theta)\big)$ where $\zt (\theta)$ lies along the line segment whose endpoints are  $z_3$ and $z_4$, see Figure 3.
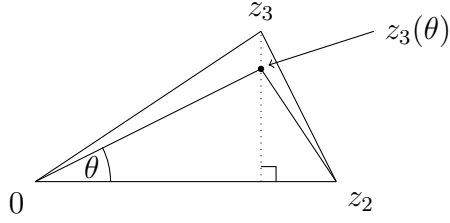
\begin{figure}[h!]\label{fig:Figure3}
     \centering

       \begin{tikzpicture}
   
    \draw (0,0)--(4,0);
    \draw (4,0) -- (3,2);
    \draw (0,0) -- (3,2);

    \draw (0,0) node[below left] {$0$};
    \draw (4,0) node[below right] {$z_2$};
    \draw (3,2) node[above] {$z_3$};

    \draw (0,0) -- (3,1.5) -- (4,0);
    \draw [dotted] (3,0) -- (3,2);
    \draw (3.2,0) -- (3.2,0.2) -- (3,0.2);
    \draw [fill=black] (3,1.5) circle (1pt);

    \draw [<-] (4.5,2) (3.1,1.55) -- (4.5,2) node [right] {$z_3(\theta)$};

    \draw (1,0) arc (0:27:1cm);
    \draw (0.75,0.2) node {$\theta$};
 \end{tikzpicture}
   \caption{Proof of  
   Theorem \ref{T:L-infty}:
   the triangles $\Delta(0, z_2, z_3)$ and $\Delta(0, z_2, z_3(\theta))$.}
   \end{figure}

Note that the three-tuples $(0, z_2, z_3)$  and $(0, z_2, z_3(\theta))$ are arranged in admissible order. Thus,
adopting the notation $\ell_1(\theta)$ and  $\ell_2(\theta)$  for the lengths of the sides of  $\Delta(0, z_2, z_3(\theta))$ opposite to $0$ and $z_2$, respectively, along with $\theta_2(\theta)$ for the angle at $z_2$, we may express $\Rh \big(0, z_2, \zt (\theta)\big)$ as the quotient 
\begin{equation}\label{E:CD}
\Rh \big(0, z_2, \zt (\theta)\big) = \frac{E \big(0, z_2, \zt (\theta)\big)}{F \big(0, z_2, \zt (\theta)\big)}
\end{equation}
with 
\begin{equation}\label{E:C}
E \big(0, z_2, \zt (\theta)\big)\ =\ 
\end{equation}
$$
=\ \lo (\theta )\cos\! \big(\mathfrak h_{0, z_2}(0) -\theta \big) +\, \ltw (\theta )  \cos\! \big(\mathfrak h_{0, z_2}(z_2) +\ttw (\theta )\big) -\, \lth\cos\! \big(\mathfrak h_{0, z_2}(z_3(\theta)) +\ttw (\theta ) -\theta\big)
$$
and
\begin{equation}\label{E:D}
F \big(0, z_2, \zt (\theta)\big) \ =\ 4\,\lth \sin\theta\, \sin\ttw (\theta)\, ,
\end{equation}
where the denominator $F\big(0, z_2, \zt (\theta)\big)$ comes from the identity
$$
\frac{\lo(\theta)\,\ltw(\theta)\,\lth}{(4\text{Area}\,\D \big(0, z_2, \zt (\theta)\big) )^2}\ =\
 \frac{1}{4\,\lth \sin\theta\, \sin\ttw (\theta)}\, 
$$
which in turn follows from \eqref{E:Marea-1} and
\eqref{E:Msin-1}.

 \noindent Let $\omega$ denote the direction of the vector $\vec{ 0\,z_2}$ that is,
 $\arg (\omega) =\ato$. Now letting $\theta\to 0$, we have that: $z_3 (\theta)\to z_4 =\beta z_2$; $\theta_2 (\theta) \to 0$; $\ltw (\theta)\to \beta\lth$, and 
  $\lo (\theta)\to (1-\beta)\lth$. By the continuity of $h(z)$ we also have that $\mathfrak h_{0, z_2}(z_3 (\theta))\to \mathfrak h_{0, z_2}(z_4)=\mathfrak h_{0, z_2}(\beta z_2)$. Inserting these into \eqref{E:C} and \eqref{E:D} we obtain that
$$
E \big(0, z_2, \zt (\theta)\big)\ \to\ \lth\big[(1-\beta)\cos\big(\mathfrak h_{0, z_2}(0)\big) + \beta\, \cos\big(\mathfrak h_{0, z_2}(z_2)\big)-\cos \big(\mathfrak h_{0, z_2}(\beta z_2)\big)\big],
$$
whereas
$$
F \big(0, z_2, \zt (\theta)\big) \to \lth\,\sin^2 0\,=\, 0.
$$
Thus, by the assumed boundedness of $\Rh$ (that is condition {\tt (i)}) we must have that
\begin{equation}\label{E:temp-2}
\cos (\mathfrak h_{0, z_2}(\beta z_2)) = (1-\beta)\cos (\mathfrak h_{0, z_2}(0)) +
 \beta \cos (\mathfrak h_{0, z_2}(z_2))
\end{equation}
for any $0<  \beta<  1$, for any direction $\omega$ and for any $z_2=|z_2|\omega$.
\vskip0.1in
\noindent Before proceeding any further, we recall the definition of $\mathfrak h_{0, z_2}(z)$:
$$
\mathfrak h_{0, z_2}(z) = 2h(z) -2\arg z_2\, ,
$$
thus in particular we have that
\begin{equation}\label{E:obs-1}
\mathfrak h_{0, \omega} (\zeta) = \mathfrak h_{0, t\omega}(\zeta)\quad \text{for any direction}\ \omega,
\ \ \text{ any\ }  t>0\ \  \text{and any}\ \zeta\in\mathbb C.
\end{equation}
\medskip
\noindent Given the direction $\omega$ as above, let us consider any point $z$ in the direction of  $\omega$, that is $z=|z|\omega$. We distinguish two cases:
\begin{itemize}
\item[\underline{{\em Case 1.}}] $|z| <  1$: Applying \eqref{E:temp-2} to $z_2:=\omega$ and 
$\beta =|z|$  we find that
\begin{equation}\label{E:temp-3}
\cos (\mathfrak h_{0, \omega}(z)) = (1-|z|)\cos (\mathfrak h_{0, \omega}(0)) + |z|\cos (\mathfrak h_{0, \omega}(\omega))
\end{equation}

\qquad\ \  for any direction $\omega$ and for any $z=|z|\omega$ with $|z|\leq1$.
\item[]
\item[\underline{{\em Case 2.}}]\ $|z|>1$: In this case we apply \eqref{E:temp-2} to $z_2:=z = |z|\omega$ and $\beta =1/|z|$. Taking 

\qquad\quad \eqref{E:obs-1} into account it is easy to see that \eqref{E:temp-3}  also holds for $|z|>1$.
\item[]
\end{itemize}
By continuity it follows that \eqref{E:temp-3} must hold also for $|z|=1$.   We conclude that the identity
\begin{equation}\label{E:temp-4}
\cos (\mathfrak h_{0, \omega}(z)) = \cos (\mathfrak h_{0, \omega}(0)) + |z|\big(\cos(\mathfrak h_{0, \omega}(\omega))-\cos (\mathfrak h_{0, \omega}(0))\big)
\end{equation}
 holds for any direction $\omega$ and for any $z=|z|\omega$. However note that the
  left-hand side of \eqref{E:temp-4} is $O(1)$, while the right-hand side is 
  $C_\omega + O(|z|)$, thus the only possibility is that 
  $$
  \cos (\mathfrak h_{0, \omega}(\omega)) = \cos (\mathfrak h_{0, \omega}(0)) \equiv C_\omega \quad \text{for \ any}\quad |\omega|=1\, .
  $$
  Substituting the latter into \eqref{E:temp-4} we obtain
 
  \begin{equation}\label{E:temp-5}
  \cos (\mathfrak h_{0, \omega}(z)) = C_\omega \quad \text{for\ any}\ |\omega|=1\quad \text{and \ for\ any}\ z=|z|\omega.
  \end{equation}
  But recall that $\mathfrak h_{0, \omega}(z) = 2h(z) -2\arg (\omega)$; thus it follows from
  \eqref{E:temp-5} that
   \begin{equation}\label{E:temp-6}
 h(z) = \arg (\omega) +  \widetilde C_\omega \quad \text{for\ any}\ |\omega|=1\quad \text{and \ for\ any}\ z=|z|\omega\, , 
  \end{equation}
  which implies that
  $$
  h(z) = f\left(\frac{z}{|z|}\right),\quad z\in\mathbb C\setminus\{0\}.
  $$
From this we conclude that $f$, and thus $h$, must be constant: if not,  there would be two directions $\omega_1\neq\omega_2$ such that   $f(\omega_1)\neq f(\omega_2)$. By the assumed continuity of $h(z)$ it would then follow that
$$
h(0) = \lim\limits_{r\to 0} h(r\omega_1) = f(\omega_1)\neq f(\omega_2) =
\lim\limits_{r\to 0} h(r\omega_2) = h(0) ,
$$
which is a contradiction.
 The proof of {\tt (i.)} $\Rightarrow$ {\tt (iii)} is concluded, and so is the proof of \cite[Theorem 5.3]{LPr}.
 \vskip0.1in
\noindent {\bf Remark (C).} It is possible that the hypothesis that $h:\mathbb C\to\mathbb R$ is continuous may be relaxed.
Below we give an example where $h(z)$ is constant except at one point and we show that the corresponding $\Rh$ is unbounded.
\vskip0.1in
\noindent Fix $\epsilon_0>0$ such that $\sin \epsilon_0>1/2$ and set
\begin{equation}
\displaystyle{h(z) = \left\{
\begin{array}{lll}
0& \text{if}\ z\neq 0,\\
\frac{\epsilon_0}{2} & \text{if}\ z=0
\end{array}
\right.
 } 
\end{equation}
Consider three-tuples of the form $\mathbf z_\lambda=\{0, 1, i\,\lambda\}$ with $\lambda>0$. Then
 it is easy to see that
$$
\Rh (\mathbf z_\lambda)\ =\ \frac{1+\lambda^2}{\lambda}\, \sin\epsilon_0\, .
$$
Now the condition 
$
|\Rh (\mathbf z)|\,\leq C
$
for all three-tuples of non-collinear points would, in particular, require that
$$
\frac12<\,\sin\epsilon_0\leq\  \frac{\lambda}{1+\lambda^2}\,C\quad \text{for any}\ \lambda>0\, ,
$$
which is not possible.
\vskip0.1in

\subsection{Proof of
Corollary \ref{C:L-infty}}
 If $h$ is constant then by 
 Theorem \ref{T:L-infty} 
 we have that $\Rh (\mathbf z) = 0$ for all three-tuples of non-collinear distinct points, thus 
$$
\left(\frac12 -\Rh (\mathbf z)\right)\!\! \left(\frac12 +\Rh (\mathbf z)\right)= \frac14>0.
$$
Conversely, if 
$$
\left(\frac12 -\Rh (\mathbf z)\right)\!\! \left(\frac12 +\Rh (\mathbf z)\right)> 0
$$
for all three-tuples of non-collinear points then
$$
|\Rh (\mathbf z)|<\frac12
$$
and 
Theorem \ref{T:L-infty}
 gives that $h$ is constant.
 \vskip0.1in
\subsection{Proof of
Theorem \ref{T:positive}} 

Conclusion {\tt (a)} is immediate
from 
Theorem \ref{T:L-infty},
 thus we only need to prove conclusion {\tt (b)}. Recall the definition 
$\mathfrak h_{z_1, z_2}(z) := 2h(z) - 2\text{Arg}(z_2-z_1)$ (for an arbitrarily fixed coordinate system).

\begin{lem}\label{L:Properties-3} Suppose that $h:\mathbb C\to \mathbb R$ is continuous and non-constant. Then there exist $z_1\neq z_2$ and  $t_0\in (0, 1)$ such that for $z_0=t_0z_2+(1-t_0)z_1$ we have
\begin{equation}\label{E:P-4}
\cos\mathfrak h_{z_1, z_2}(z_1) - \cos\mathfrak h_{z_1, z_2}(z_0) \, <\, 0\, < \cos\mathfrak h_{z_1, z_2}(z_2) - \cos\mathfrak h_{z_1, z_2}(z_0)\, .
\end{equation}
\end{lem}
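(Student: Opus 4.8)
The goal is to produce a configuration of three collinear points $z_1, z_0, z_2$ (with $z_0$ strictly between) on which the function $\zeta \mapsto \cos \mathfrak h_{z_1,z_2}(\zeta)$ takes a value at $z_0$ that is strictly below its value at $z_1$ and strictly above its value at $z_2$ — in other words, a point where the restriction of this cosine function to the segment is neither a max nor a min among the three endpoints, but ``sandwiched'' in a specific orientation. The plan is to argue by contradiction: suppose no such configuration exists. Since $\mathfrak h_{z_1,z_2}(\zeta) = 2h(\zeta) - 2\mathrm{Arg}(z_2-z_1)$ only shifts $h$ by a constant depending on the \emph{direction} of $z_2 - z_1$, the failure of \eqref{E:P-4} for all admissible triples translates into a rigid monotonicity-type constraint on $\zeta \mapsto \cos(2h(\zeta) - \text{const})$ along \emph{every} line segment in $\mathbb C$, for \emph{every} choice of the shifting constant that is realizable as $2\,\mathrm{Arg}$ of the segment's direction. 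Because the direction of a segment and its set of interior points can be decoupled (translate a segment of fixed direction anywhere, and independently rotate to change the direction-constant), one effectively gets: for every real constant $c$ and every segment $[z_1,z_2]$, the quantity $\cos(2h(\zeta)-c)$ restricted to the segment never dips strictly below $\cos(2h(z_1)-c)$ while simultaneously rising strictly above $\cos(2h(z_2)-c)$ at an interior point.

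The key step is to extract from this a contradiction with $h$ being non-constant. First I would reduce to a one-variable statement: fix a segment and look at $g(t) := 2h(z_1 + t(z_2-z_1))$ for $t\in[0,1]$, a continuous function with $g$ not identically constant for a suitable choice of segment (possible since $h$ is non-constant and continuous — pick $z_1, z_2$ with $h(z_1)\neq h(z_2)$, or if $h$ is constant on every line then it is globally constant). The hypothesis ``\eqref{E:P-4} fails for every $c$'' says: for every $c\in\mathbb R$ and every $t_0\in(0,1)$, it is \emph{not} the case that $\cos(g(t_0)-c) < \cos(g(0)-c)$ and $\cos(g(t_0)-c) > \cos(g(1)-c)$ hold together. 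I would then choose $c$ cleverly relative to the values $g(0), g(1)$ and an interior value $g(t_0)$: since $\cos$ is a non-injective periodic function, for a generic choice of three reals $g(0), g(t_0), g(1)$ that are not all equal one can solve for a constant $c$ making $\cos(g(t_0)-c)$ strictly between $\cos(g(0)-c)$ and $\cos(g(1)-c)$ in the prescribed order — this is the analytic heart of the matter. Concretely, by the intermediate value theorem applied along the segment, $g$ takes some interior value $v$ strictly between $g(0)$ and some other value, or $g$ has an interior extremum; in either case one can position the ``cosine window'' (by choosing $c$, i.e.\ by choosing where the peak of $\cos(\cdot - c)$ sits) to realize the forbidden inequality \eqref{E:P-4}.

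The main obstacle I anticipate is handling the degenerate/edge cases cleanly: when $g(0) = g(1)$ (so the two outer cosine values automatically coincide for every $c$) one must use an interior value $v = g(t_0) \neq g(0)$ and then \eqref{E:P-4} would demand $\cos(v - c) < \cos(g(0)-c) < \cos(v-c)$, impossible — so this case forces $g$ to be constant on the segment, and then I must leverage non-constancy on \emph{some} segment. When $g(0)\neq g(1)$, the subtlety is that the inequalities in \eqref{E:P-4} are \emph{strict} and oriented, so I need $\cos(\cdot-c)$ to separate three points in a particular way; I would do this by taking $t_0$ where $g$ attains a value $v$ with, say, $g(0) < v$ or $v < g(0)$ appropriately, then choosing $c$ so that the interval $[\,g(t_0)-c,\,g(0)-c\,]$ (or its reverse) sits on a strictly monotone branch of $\cos$ while $g(1)-c$ lands where $\cos$ is smaller — feasible because we have a full real parameter $c$ and $\cos$ has arbitrarily placed descending branches of length $\pi$. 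Keeping track of which of the finitely many branch-configurations actually occurs, and verifying one of them always yields \eqref{E:P-4}, is the routine-but-delicate bookkeeping; everything else (the reduction to a segment, the use of continuity, the contradiction with non-constancy) is straightforward.
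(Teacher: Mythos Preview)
Your approach has a genuine gap at the ``decoupling'' step. You claim that the constant $c = 2\,\mathrm{Arg}(z_2 - z_1)$ can be chosen independently of the segment $[z_1, z_2]$, so that for fixed values $g(0), g(t_0), g(1)$ you may afterwards slide the cosine window to separate these three values in the prescribed order. But $c$ is completely determined by the direction of the segment: rotating the segment to adjust $c$ moves the endpoints $z_1, z_2$ and hence changes $g(0), g(1)$ as well. You cannot fix $g$ and vary $c$ independently. Without this freedom, the step ``choose $c$ cleverly relative to $g(0), g(1), g(t_0)$'' has no content, and the case analysis that follows does not go through as written.

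The paper's argument avoids this difficulty by a simple reduction you are overlooking. Swapping $z_1 \leftrightarrow z_2$ shifts $\mathrm{Arg}(z_2-z_1)$ by $\pi$, hence $\mathfrak h_{z_1,z_2}$ by $2\pi$, so $\cos\mathfrak h_{z_1,z_2}(\zeta) = \cos\mathfrak h_{z_2,z_1}(\zeta)$ for every $\zeta$. It therefore suffices to find \emph{one} pair $z_1 \neq z_2$ with the strict endpoint inequality $\cos\mathfrak h_{z_1,z_2}(z_1) < \cos\mathfrak h_{z_1,z_2}(z_2)$; the intermediate value theorem applied to $t \mapsto \cos\mathfrak h_{z_1,z_2}\bigl((1-t)z_1 + t z_2\bigr)$ then immediately supplies the interior point $z_0$. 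If no such pair exists, the swap symmetry forces equality $\cos\mathfrak h_{z_1,z_2}(z_1) = \cos\mathfrak h_{z_1,z_2}(z_2)$ for \emph{all} $z_1 \neq z_2$, which (via $\cos A = \cos B \Leftrightarrow A \pm B \in 2\pi\mathbb Z$) pins $h(z_2)$ to a countable discrete set depending on $h(z_1)$ and the direction. Restricting $z_2$ to a line through $z_1$ on which $h$ is non-constant then gives an interval contained in a discrete set, a contradiction. This is both shorter and avoids the need to manipulate $c$ at all.
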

\begin{proof}
It suffices to find $z_1\neq z_2$ such that 
\begin{equation}\label{E:less}
\cos\mathfrak h_{z_1, z_2}(z_1) < \cos\mathfrak h_{z_1, z_2}(z_2).
\end{equation}
The existence of $z_0$ will then then follow by the mean-value theorem applied to 
$f(t):= \cos\mathfrak h_{z_1, z_2}\big(tz_2 +(1-t)z_1\big)$).  We proceed by contradiction and suppose that 
 \begin{align*}  \cos\mathfrak h_{z_1, z_2}(z_1) &= \cos\mathfrak h_{z_1, z_2}(z_2) \text{ for all } z_1\neq z_2,  
\text{ that is, } \\  \cos\left (2h(z_1) - 2\text{Arg}(z_2-z_1)\right) &= \cos\left (2h(z_2) - 2\text{Arg}(z_2-z_1)\right) \text{ for all }  z_1\neq z_2.
\end{align*} 
But $\cos A = \cos B$ if and only if either $B - A \in 2 \pi \mathbb Z$ or $B+A \in 2 \pi \mathbb Z$. Thus for any $z_1 \ne z_2$, there exists $k = k(z_1, z_2) \in \mathbb Z$ such that one of the two possibilities holds:
\[ 2h(z_2) - 2\text{Arg}(z_2-z_1) = \begin{cases} 2h(z_1) - 2\text{Arg}(z_2-z_1) + 2k \pi, \text{ or } \\ 2k \pi - 2h(z_1) + 2\text{Arg}(z_2-z_1). \end{cases} \]
Equivalently stated, 
\begin{equation} \label{h-discrete}  h(z_2) \in \bigl\{h(z_1) + k \pi : k \in \mathbb Z \bigr\} \cup \bigl\{k \pi - h(z_1) + 2 \text{Arg}(z_2-z_1) : k \in \mathbb Z \bigr\}.  \end{equation} 
If $h$ is continuous and non-constant, there exist $z_0, \omega \in \mathbb C$ and $|\omega| = 1$ such that the map
\[ t \in \mathbb R \mapsto h(z_0 + t \omega) \text{ is continuous and non-constant.} \]
Hence, the image set $h(\mathscr{L})$  contains an interval, where $\mathscr{L} = \{z_0 + t \omega : t \in \mathbb R\}$.  However setting $z_1=z_0$ and $z_2 \in \mathscr{L}$ in \eqref{h-discrete}, we find that
\[ h (\mathscr{L}) \subseteq \bigl\{ h(z_0) + k \pi : k \in \mathbb Z \bigr\} \cup \bigl\{ k \pi - h(z_0) + 2\omega : k \in \mathbb Z \bigr\}. \] 
The right hand side above is a discrete set, whereas the left contains an interval, providing the desired contradiction. 
\end{proof}

\noindent {\em Proof of {\tt (b)}.}
Let $z_1, z_0$ and $z_2$ be as in Lemma \ref{L:Properties-3}. 
 Consider two families of non-collinear three-tuples $\displaystyle{\{\mathbf z_R(\theta)\}_{\theta, R}}$ and
 $\displaystyle{\{\mathbf z_{R'}(\theta')\}_{\theta', R'}}$ defined as follows:

\begin{equation}\label{E:family-1}
\mathbf z_R(\theta) :=  \big(z_0, z_2^R, z_3(\theta)\big),\quad \theta\in \left(0, \frac{\pi}{2}\right), \ \ R>0,
\end{equation}
where 
$$
z_2^R := (z_2-z_0)(1+R) + z_0
$$
and the point $z_3(\theta)$ has been chosen  so that $\displaystyle{\lim\limits_{\theta\to 0}z_3(\theta)=z_2}$, see Figure 4; and 
\begin{equation}\label{E:family-2}
\mathbf z_R(\theta') :=  \big(z_1^{R'}, z_0, z_2^R, z_3(\theta')\big),\quad \theta'\in \left(0, \frac{\pi}{2}\right), \ \ R'>0,
\end{equation}
where 
$$
z_1^{R'} := R'(z_1-z_0) + z_1
$$
and the point $z_3(\theta')$ has been chosen  so that $\displaystyle{\lim\limits_{\theta'\to 0}z_3(\theta')=z_1}$, see again Figure 4.
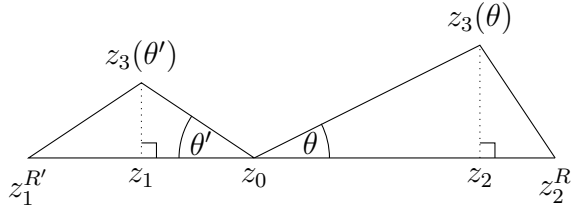
\begin{figure}[h!]\label{Figure 4}
     \centering

       \begin{tikzpicture}
   
    \draw (0,0)--(4,0);

    \draw (0,0) node[below ] {$z_0$};
    \draw (3,0) node[below] {$z_2$};

    \draw (0,0) -- (3,1.5) -- (4,0);
    \draw [dotted] (3,0) -- (3,1.5);
    \draw (3.2,0) -- (3.2,0.2) -- (3,0.2);

    \draw (3,1.5) node [above] {$z_3(\theta)$};

    \draw (1,0) arc (0:27:1cm);
    \draw (0.75,0.2) node {$\theta$};

    \draw (0,0) -- (-3,0) -- (-1.5,1) -- (0,0);
    \draw (-1.5,1) node [above] {$z_3(\theta')$};
    \draw (-1,0) arc (180:146:1cm);
    \draw (-1,-0.1) node [above right] {$\theta'$};
    \draw [dotted] (-1.5,1) -- (-1.5,0) node [below] {$z_1$};
    \draw (-1.3,0) -- (-1.3,0.2) -- (-1.5,0.2);
    \draw (-3,0) node [below] {$z_1^{R'}$};
    \draw (4,0) node [below] {$z_2^R$};

 \end{tikzpicture}
 \caption{Proof of
 Theorem \ref{T:positive}: 
 the families $\displaystyle{\{\mathbf z_R(\theta)\}_{\theta, R}}$ and
$\displaystyle{\{\mathbf z_{R'}(\theta')\}_{\theta', R'}}$.}
  
   \end{figure}

Note that the three-tuples $(z_0, , z_2^{R}, z_3(\theta))$ and $(z_1^{R'}, z_0, z_3(\theta'))$ are arranged in admissible order. We claim that there exist small $\theta_0$ and $\theta'_0$, and large $R$ and $R'$ such that
\begin{equation}\label{E:claim-triple-1}
\frac12 +\Rh (\mathbf z_{R_0}(\theta_0))<0
\end{equation}
and 
\begin{equation}\label{E:claim-triple-2}
\frac12 +\Rh (\mathbf z_{R'_0}(\theta_0'))>0\, .
\end{equation}
To prove claim \eqref{E:claim-triple-1} define
$$\mathfrak{h}(z) := \mathfrak{h}_{z_1, z_2}(z),\quad z\in\mathbb C.$$
\vskip0.1in
\noindent We point out that since $\mathfrak{h}_{z_1, z_2}(z)$ depends on $z_1$ and $z_2$ only through
$\text{Arg}(z_2-z_1)$ it follows that

$$
\mathfrak{h}_{z_1, z_2}(z) = \mathfrak{h}_{\tilde{z}_1, \tilde{z}_2}(z)
$$
whenever $$\text{Arg}(z_2-z_1) = \text{Arg}(\tilde{z}_2-\tilde{z}_1).$$In particular we have that
$$
\mathfrak{h}(z)=\mathfrak{h}_{z_0, z_2^R}(z)= \mathfrak{h}_{z_1^R, z_0}(z)\, .
$$
Invoking 
\eqref{E:RH}
 to compute $1/2 + \Rh (\mathbf z_R(\theta))$ 
and using notation analogous to \eqref{E:CD} we see that 
$$
\text{sign}\left(\frac12 + \Rh (\mathbf z_R(\theta))\right) = \text{sign}\bigg(F(\mathbf z_R(\theta)) + 2\,E(\mathbf z_R(\theta))\bigg) \quad \text{for all}\ \ \theta>0,\ \ R>0,
$$
where
\begin{align*}
F(\mathbf z_R(\theta)) &= 4 (1+ R) |z_2-z_0|\sin\theta\,\sin\theta_2(\theta, R), \text{ and } \\ 
 E(\mathbf z_R(\theta))  &=\ell_1^R(\theta)\cos\left(\mathfrak{h}(z_0) -\theta\right)+\,
 \ell_2(\theta)\cos\left(\mathfrak{h}(z_2^R)+\theta_2(\theta, R)\right)\\ &\qquad - |z_2-z_0|(1+R)\cos\left(\mathfrak{h}(z_3(\theta) + \theta_2(\theta, R) -\theta\right).
 \end{align*}
 We see that, for fixed $R>0$, 
 \begin{align*}
 \lim\limits_{\theta\to 0} \Bigl(F(\mathbf z_R(\theta)) &+ 2E(\mathbf z_R(\theta))\Bigr)\\ 
 &= 2|z_2-z_0|\Bigl(R\big(\cos\mathfrak{h}(z_0) -\cos\mathfrak{h}(z_2)\big)  +
  \cos\mathfrak{h}(z_2^R) -\cos\mathfrak{h}(z_2)\Bigr).
 \end{align*} 
 But \eqref{E:P-4} tells us that the latter is negative for sufficiently large $R$. Thus, by continuity, there are
 $\theta_0 \ll 1$ and $R_0 \gg 1$ such that \eqref{E:claim-triple-1} holds.
  \vskip0.1in
  The proof of claim \eqref{E:claim-triple-2} is similar: proceeding as above we find that
$$
\text{sign}\Bigl(\frac12 + \Rh (\mathbf z_R(\theta'))\Bigr) = \text{sign}\Bigl(F(\mathbf z_R(\theta')) + 2\,E(\mathbf z_R(\theta'))\Bigr) \quad \text{for all}\ \ \theta>0,\,R>0,
$$
where
\begin{align*}
F(\mathbf z_R(\theta')) &= 4(1+R)|z_1-z_0|\sin\theta'\,\sin\theta_1\!(\theta', R), \text{ and } \\ 
 E(\mathbf z_R(\theta')) &= \ell_1(\theta')\cos\left(\mathfrak{h}(z_1^R) -\theta_1(\theta', R)\right)
 +\, \ell_2^R(\theta')\cos\left(\mathfrak{h}(z_0)+\theta'\right)\\ & \qquad \qquad - |z_1-z_0|(1+R)\cos\left(\mathfrak{h}(z_3(\theta') + \theta' -\theta_1(\theta', R)\right).
\end{align*}
We obtain (again for fixed $R>0$)
 \begin{align*} 
 \lim\limits_{\theta'\to 0} \Bigl(F(\mathbf z_R(\theta')) &+ 2E(\mathbf z_R(\theta))\Bigr) \\ 
 &= 2|z_1-z_0|\Bigl(R\big(\cos\mathfrak{h}(z_0) -\cos\mathfrak{h}(z_1)\big)  +
  \cos\mathfrak{h}(z_1^R) -\cos\mathfrak{h}(z_1)\Bigr),
 \end{align*} 
which is positive by \eqref{E:P-4}, for $R$ large enough. Therefore, by continuity, there are
$\theta'_0 \ll 1$ and $R_0' \gg 1$ such that
 \begin{equation}\label{E:pos}
 \frac12 + \Rh\left(\mathbf z_{R'_0}(\theta'_0)\right) >0.
 \end{equation}
The proof of
Theorem \ref{T:positive} 
is concluded.


\subsection{Proof of 
Proposition \ref{P:symm-id-K-star}}
On account of \eqref{E:ReK-simpl} and \eqref{E:Symm-simpl2} we have
\begin{equation}\label{E:AdjSym1}
{\tt S}[\,K_h^\ast\,](\mathbf z)
 =\, \frac{2}{\ell_1^2\ell_2^2\ell_3^2}
\sum\limits
_{\stackrel{j}{k<l}} \ell^2_j\,
\mbox{Re}
\bigg(e^{-i\big(h(z_k)-h(z_l)\big)}(z_k-z_j)(\overline{z_l-z_j})\bigg)\, .
\end{equation}
\vskip0.05in
Adopting the labeling scheme for $\D(\mathbf z)$ that was described in Section \ref{SS:LabelingScheme},
we expand the above sum and invoke \eqref{E:trig-id-1} and \eqref{E:trig-id-2} to express each of $(z_k-z_j)$, resp. $(\overline{z_l-z_j})$, in terms of $(z_2-z_1)$, resp. $(\overline{z_2-z_1})$. This leads us to the identity
$$
{\tt S}[\,K_h^\ast\,](\mathbf z) = \
\frac{2}{\ell_1^2 \ell_2^2 \ell_3^2}\Big[\ell_1^2\ell_3^2\,\frac{\beta}{\cos\theta_1}\cos\big(h(z_2)-h(z_3)+
\theta_1\big)+
$$
$$
\ell_2^2\ell_3^2\,\frac{(1-\beta)}{\cos\theta_2}\cos\big(h(z_1)-h(z_3)-
\theta_2\big) -
\ell_3^4\,\frac{\beta}{\cos\theta_1}\frac{(1-\beta)}{\cos\theta_2}\cos\big(h(z_1)-h(z_2)-
(\theta_1+\theta_2)\big)
\Big].
$$
The latter simplifies to
$$
{\tt S}[\,K_h^\ast\,](\mathbf z) = \
\frac{2}{\ell_1 \ell_2 \ell_3}\Big[
\ell_1\ell_3\frac{\beta}{\ell_2\cos \theta_1}\cos\big(h(z_2)- h(z_3) +\theta_1\big) 
+ 
$$
$$
+\ell_2\ell_3\frac{(1-\beta)}{\ell_1\cos \theta_2}\cos\big(h(z_1)- h(z_3) -\theta_2\big) 
-
\ell_3^3\frac{\beta}{\ell_2\cos\theta_1}\frac{(1-\beta)}{\ell_1\cos \theta_2}\cos\big(h(z_1)- h(z_2) -(\theta_1+\theta_2)\big)
\Big].
$$
Recalling
\eqref{E:aux-one} and \eqref{E:elem-trig}  we conclude that
\begin{equation*}\label{E:repr-adj}
{\tt S}[\,K_h^\ast\,](\mathbf z) = \ c^2(\mathbf z)\,\H (\mathbf z)
\end{equation*}
with $\H (\mathbf z)$ as in \eqref{E:H}.
The proof is concluded.
\vskip0.1in
We point out that the arguments in Remark {\bf (B)} (showing the well posedness of the quantity $c^2(z)\Rh (\mathbf z)$ for triples of distinct (but possibly collinear) points) apply verbatim to $c^2(\mathbf z)\,\H (\mathbf z)$.


\subsection{Proof of 
Theorem \ref{T:L-infty-H}}
\label{SS:H-unbd}
\noindent Since the implication {\tt (ii)} $\Rightarrow$ {\tt (i)} is trivial, we only need to prove
that
 \medskip
 
 \centerline{{\tt (iii)} $\Rightarrow$ {\tt (ii)}\quad  and\quad   {\tt (i)} $\Rightarrow$ {\tt (iii)}.}
\vskip0.07in

\noindent We first show that {\tt (iii)} $\Rightarrow$ {\tt (ii)}. If $h(z)=$ const. then Proposition \ref{P:symm-id-K-star} 
 gives that
$$
\H (\mathbf z)\,=\, 2\,\frac{\ell_1\ell_2\ell_3}{\left(4\text{Area}\Delta(\mathbf z)\right)^2}
\big\{\ell_1\cos\theta_1 +\ell_2\cos\theta_2+\ell_3\cos\theta_3\big\}
$$
and it follows from \eqref{E:aux-one} and \eqref{E:Mcos-2} that the latter equals 1.
\vskip0.1in
\noindent We are left to prove the implication {\tt (i)} $\Rightarrow$ {\tt (iii)}. Equivalently, we show that if $h$ is continuous and non-constant then the inequality 
\begin{equation}\label{E:impossibleH}
|\H (\mathbf z)|\leq C\quad \text{for any non-collinear three-tuple}\ \mathbf z
\end{equation}
is impossible. To this end, first note that ${\tt S}[\,K_h^\ast\,] = {\tt S}[\,K_{h+c}^\ast\,]$, for any constant $c$. Thus we may assume without loss of generality that $h(0) = 0$. Define
\begin{align} S &:= \{z \in \mathbb C : h(z) \not\in 2 \pi \mathbb Z \},  \text{ and } \\  \mathcal A_z &:= \Bigl\{ \cot \Bigl(\frac{h(z)}{4} \Bigr), - \tan \Bigl( \frac{h(z)}{4} \Bigr)  \Bigr\}, \quad \text{ for } z \in S. 
\end{align}  
Note that $h(z)\neq 0$ for each $z\in S$, and in particular $0\notin S$.
\begin{lem}\label{L:Properties-1}
 $S$ is nonempty, and hence $\mathcal A_z$ is well-defined for every $z \in S$.
  \end{lem}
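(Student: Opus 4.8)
\textbf{Proof proposal for Lemma \ref{L:Properties-1}.}

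The plan is to show that the assumption ``$S = \emptyset$'' forces $h$ to be constant, contradicting the standing hypothesis. Recall that by the preceding normalization we have assumed $h(0) = 0$. If $S$ were empty, then $h(z) \in 2\pi\mathbb Z$ for every $z \in \mathbb C$; that is, the continuous function $h/(2\pi)$ would take only integer values. First I would invoke connectedness of $\mathbb C$: a continuous map from a connected space into the discrete set $2\pi\mathbb Z$ is constant. Since $h(0) = 0$, that constant value is $0$, so $h \equiv 0$ on $\mathbb C$, contradicting the hypothesis that $h$ is non-constant. Hence $S \ne \emptyset$.

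Once $S$ is nonempty, the second assertion is immediate from the definitions: for any $z \in S$ we have $h(z) \notin 2\pi\mathbb Z$, so in particular $h(z)/4 \notin (\pi/2)\mathbb Z$, whence neither $\cot(h(z)/4)$ nor $\tan(h(z)/4)$ has a pole; thus $\mathcal A_z = \{\cot(h(z)/4),\, -\tan(h(z)/4)\}$ is a well-defined (two-element, since the two quantities are reciprocal up to sign and hence distinct for $h(z)/4$ not a multiple of $\pi/2$) subset of $\mathbb R$. I would note in passing the remark already recorded before the lemma, namely that $h(z) \ne 0$ for $z \in S$ and consequently $0 \notin S$; this is consistent with, and in fact a special case of, the observation that $h(z) \notin 2\pi\mathbb Z$ on $S$ together with $h(0)=0$.

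There is essentially no obstacle here: the only subtlety worth flagging is that the argument uses the global continuity of $h$ on all of $\mathbb C$ (equivalently continuity on a connected set containing $0$) — it is precisely the connectedness of the domain that converts ``$h$ is $2\pi\mathbb Z$-valued and continuous'' into ``$h$ is constant.'' The normalization $h(0)=0$, legitimate because $\mathtt S[K_h^\ast] = \mathtt S[K_{h+c}^\ast]$ for any real constant $c$, is what pins the constant value to $0$, but for the mere nonemptiness of $S$ one does not even need the value: any non-constant continuous $h$ cannot be $2\pi\mathbb Z$-valued.
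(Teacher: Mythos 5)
Your proof is correct and takes essentially the same approach as the paper: both arguments observe that $S=\emptyset$ would force the continuous function $h$ to take values in the discrete set $2\pi\mathbb{Z}$, which by connectedness of $\mathbb{C}$ forces $h$ to be constant (and, given the normalization $h(0)=0$, identically zero), contradicting non-constancy. Your write-up is a slightly more explicit unpacking of the paper's terser dichotomy (``either $h\equiv 0$ or $h(\mathbb{C})$ is disconnected''), and the added verification that $\cot(h(z)/4)$ and $-\tan(h(z)/4)$ are both finite and distinct for $z\in S$ is accurate.
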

\begin{proof} 
If $S = \emptyset$, this means either $h \equiv 0$, or that $h(\mathbb C)$ is disconnected. The first case contradicts the fact that $h$ is nonconstant, and the second contradicts its continuity.    
\end{proof} 
\begin{lem}\label{L:Properties-2}
For every $z_0 \in S$ there exist two numbers $s \neq t \in \mathbb R^+$ such that
\begin{align} \label{E:1}
& \{sz_0, tz_0\} \subseteq S, \text{ and } \\   &\mathcal A_{sz_0} \cap \mathcal A_{tz_0}= \emptyset. \label{E:2}
\end{align}   
\end{lem}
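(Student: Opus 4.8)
\emph{Plan.} The idea is to confine everything to the ray through $z_0$ and reduce \eqref{E:2} to an elementary statement about when two values of $h$ on that ray differ by an element of $2\pi\mathbb Z$.

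First I would record that the normalization $h(0)=0$ in force, together with $0\in 2\pi\mathbb Z$ and $z_0\in S$, forces $h(z_0)\neq h(0)$; in particular $z_0\neq 0$, and the function $g(t):=h(tz_0)$, $t\ge 0$, is continuous and non-constant. By the intermediate value theorem $g([0,1])$ contains the non-degenerate closed interval $I$ with endpoints $0$ and $h(z_0)$. Since $I$ is bounded, $I\cap 2\pi\mathbb Z$ is finite, so the interior of $I$ minus $2\pi\mathbb Z$ is a non-empty open set; pick inside it a non-degenerate subinterval $J$ of length less than $2\pi$ with $J\cap 2\pi\mathbb Z=\emptyset$, and then pick $v\neq w$ in $J$. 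Then $0<|v-w|<2\pi$, hence $v-w\notin 2\pi\mathbb Z$. Because $v,w\in J\subseteq I\subseteq g([0,1])$ and $g(0)=0\notin J$, there are $s,t>0$ with $g(s)=v$, $g(t)=w$, and $s\neq t$ since $v\neq w$. Thus $h(sz_0)=v\notin 2\pi\mathbb Z$ and $h(tz_0)=w\notin 2\pi\mathbb Z$, which is exactly \eqref{E:1}; moreover $v,w\notin 2\pi\mathbb Z$ guarantees (as in the setup) that $\cot(v/4),\tan(v/4),\cot(w/4),\tan(w/4)$ are all finite, so $\mathcal A_{sz_0}$ and $\mathcal A_{tz_0}$ are honest two-element subsets of $\mathbb R$.

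It remains to verify \eqref{E:2}. Writing $\mathcal A_{sz_0}=\{\cot(v/4),-\tan(v/4)\}$ and $\mathcal A_{tz_0}=\{\cot(w/4),-\tan(w/4)\}$, and using $\cot x=-\tan(x+\pi/2)$ together with the $\pi$-periodicity of $\cot$ and $\tan$, each of the four possible equalities among these numbers translates into a congruence for $v-w$: the equalities $\cot(v/4)=\cot(w/4)$ and $\tan(v/4)=\tan(w/4)$ are equivalent to $v-w\in 4\pi\mathbb Z$, while $\cot(v/4)=-\tan(w/4)$ and $-\tan(v/4)=\cot(w/4)$ are each equivalent to $v-w\in 2\pi+4\pi\mathbb Z$. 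Hence $\mathcal A_{sz_0}\cap\mathcal A_{tz_0}\neq\emptyset$ if and only if $v-w\in 4\pi\mathbb Z\cup(2\pi+4\pi\mathbb Z)=2\pi\mathbb Z$. Since we arranged $v-w\notin 2\pi\mathbb Z$, the intersection is empty, proving \eqref{E:2} and the lemma.

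The only delicate point is the bookkeeping in the last paragraph — correctly sorting the four cross-equalities into the two cosets $4\pi\mathbb Z$ and $2\pi+4\pi\mathbb Z$ and observing that their union is $2\pi\mathbb Z$. Everything else is a routine use of continuity and the intermediate value theorem, so I expect no further obstacles.
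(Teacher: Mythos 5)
Your proof is correct, but it follows a genuinely different route than the paper's. The paper chooses $\epsilon>0$ small enough that $\alpha\mapsto\cot(\alpha/4)$ and $\alpha\mapsto-\tan(\alpha/4)$ are injective and have disjoint ranges on $|\alpha|<\epsilon$, then uses the intermediate value theorem to find $s\neq t$ with $g(s),g(t)$ small, positive, and distinct; disjointness of $\mathcal A_{sz_0}$ and $\mathcal A_{tz_0}$ follows from the local injectivity/range-separation. You instead prove the exact periodicity criterion $\mathcal A_{sz_0}\cap\mathcal A_{tz_0}\neq\emptyset\iff h(sz_0)-h(tz_0)\in 2\pi\mathbb Z$, and then use the intermediate value theorem to produce $s\neq t$ with $h(sz_0),h(tz_0)$ both off $2\pi\mathbb Z$ and within distance $<2\pi$ of each other. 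Your trigonometric bookkeeping is right: $\cot(v/4)=\cot(w/4)$ and $\tan(v/4)=\tan(w/4)$ each force $v-w\in 4\pi\mathbb Z$, while $\cot(v/4)=-\tan(w/4)$ (via $\cot x=-\tan(x-\pi/2)$) and its mirror force $v-w\in 2\pi+4\pi\mathbb Z$, and the union is $2\pi\mathbb Z$. The paper's argument is more local and slightly ad hoc (it needs to fix an auxiliary $\epsilon$ and a WLOG sign), while yours isolates the sharp algebraic obstruction, which dovetails with the role the $2\pi\mathbb Z$-coset structure already plays elsewhere in the section; either is equally elementary and both are fine.
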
 
\begin{proof} 
By continuity of $h$, the function $g: \mathbb R \rightarrow \mathbb R$ given by \[g(t) := h(t z_0) \] is a non-constant continuous function on $\mathbb R$, and we have that $g(0)=0$, $g(1) \neq 0$.  The intermediate value theorem ensures that for any small $\epsilon > 0$, there exists $t_0 \in (0,1)$ such that $ 0 < |g(t_0)| < \epsilon < 2\pi$. Let us choose $\epsilon > 0$ small enough so that \begin{align*} 
&\alpha \mapsto \cot(\alpha/4), \quad \alpha \mapsto - \tan(\alpha/4) \text{ are both injective on } |\alpha| < \epsilon, \text{ and} \\  &\{\cot(\alpha/4) : |\alpha| < \epsilon  \} \cap \{ - \tan(\alpha/4) : |\alpha| < \epsilon \} = \emptyset. \end{align*} Without loss of generality, assume $g(t_0) > 0$. By the intermediate value theorem again, $g$ assumes every value between $g(0) = 0$ and $g(t_0)$ on the interval $[0, t_0]$. In particular, let $s$ and $t$ denote two distinct points in $(0, t_0)$ such that $g(s) \ne g(t)$ and $g(s), g(t) \in (0, g(t_0))$.  The conclusions of the lemma then hold for this choice of $s$ and $t$. 
 \end{proof}

 \vskip0.2in

\noindent We now continue with the proof of the implication {\tt (i)} $\Rightarrow$ {\tt (iii)} in
 Theorem \ref{T:L-infty-H}. 
Let $0\neq z_0\in S$ and 
 $s\neq t\in \mathbb R^+$  be as in Lemmas \ref{L:Properties-1} and \ref{L:Properties-2}, respectively. 
 Set 
 
 $$\zt:= sz_0\in S\quad \text{and}\quad  
 \zt':= tz_0\in S\, .$$
 Thus, the points $0, \zt$ and $\zt'$ are distinct but collinear, and they determine a ray $\mathcal L_0$ depicted in Figure 
 5 below. Now let $\beta>1$ be given and define two points
 $$
 z_4(\beta) := \beta z_3\in \mathcal L_0,\quad \text{and} \quad z_4'(\beta) := \beta \zt' \in \mathcal L_0
 $$
 
\noindent  For any $\theta\in (0, \pi/2)$ let $\mathcal L_\theta$ be (any fixed) ray forming an angle $\theta$ with $\mathcal L_0$, see Figure 5.
 
 \vskip0.05in

Next we let 
$$z_1(\theta, \beta)\in \mathcal L_\theta\quad \text{and}\quad z_1'(\theta, \beta)\in \mathcal L_\theta
$$ 
be the intersection points with $\mathcal L_\theta$ of the lines perpendicular to $\mathcal L_0$ and passing through $z_4 (\beta)$ and $z_4'(\beta)$, respectively. It is easy to check that the three-tuples 
$(z_1(\theta, \beta), 0, \zt)$ and $(z_1'(\theta, \beta), 0, \zt')$ are arranged in admissible form: see Figure 5.
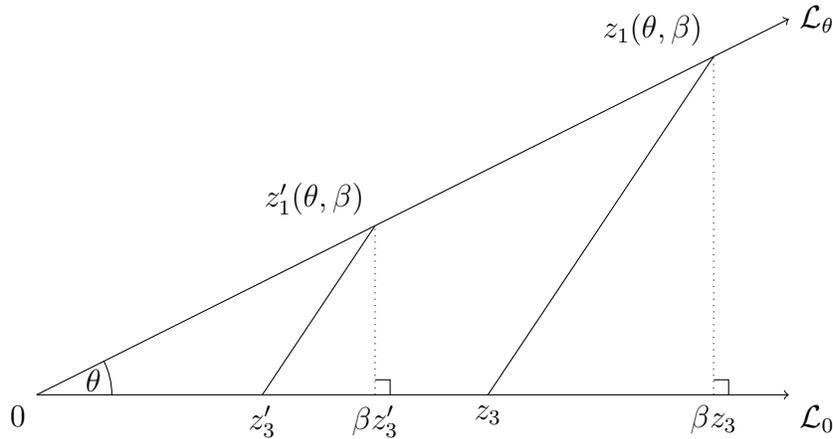
\begin{figure}[h!]\label{Figure 5}
     \centering

       \begin{tikzpicture}
   
    \draw [->] (0,0)--(10,0) node [below right] {$\mathcal{L}_0$};
    \draw [->] (0,0) -- (10,5) node [right]  {$\mathcal{L}_\theta$};
    \draw (3,0)  node [below] {$z_3'$} -- (4.5,4.5/2) node [above left] {$z_1'(\theta,\beta)$};
    \draw (6,0) node [below] {$z_3$} -- (9,4.5) node [above left] {$z_1(\theta,\beta)$};
    \draw [dotted] (4.5,0) node [below] {$\beta z_3'$} -- (4.5,4.5/2);
    \draw [dotted] (9,0) node [below] {$\beta z_3$} -- (9,4.5);
    \draw (4.7,0) -- (4.7,0.2) -- (4.5,0.2);
    \draw (9.2,0) -- (9.2,0.2) -- (9.0,0.2);
    \draw (0,0) node [below left] {$0$};

    \draw (1,0) arc (0:27:1);
    \draw (1,-0.1) node [above left] {$\theta$};
   
     \end{tikzpicture}
\caption{Proof of  
Theorem \ref{T:L-infty-H}: 
the three-tuples $(z_1'(\theta, \beta), 0, \zt')$ and $(z_1(\theta, \beta), 0, \zt)$.}   
   \end{figure}

\bigskip

\noindent Let us focus for a moment on the family of triangles determined by the first three-tuple, $\Delta(z_1(\theta, \beta), 0, \zt)$: 
it follows from 
\eqref{E:H}
 that 
\[ \frac{(4\text{Area}(\Delta))^2\,\H (z_1(\theta, \beta), 0, z_3)}{\lo\,\ltw(\theta, \beta)\,\lth (\theta, \beta)}\,=\, A(\theta, \beta, \zt)\, \]
holds for any $\beta>1$ and $0<\theta<\pi/2$, where we have set
\begin{align*}
A(\theta, \beta, \zt) &:= 
\lo\cos(h(\zt) + \theta)\, +\, \ltw(\theta, \beta) \cos (h(z_1(\theta, \beta)-h(\zt) -\theta) \\ &\qquad  \qquad -
\lth(\theta, \beta)\cos(h(z_1(\theta, \beta))-\theta_1(\theta, \beta)-\theta).
\end{align*} 
(Recall that $h(0)=0$.)
Next we point out that
\[ 4\text{Area}(\Delta) = 2\lo^2\,\beta\tan\theta\, ;\; \ltw (\theta, \beta) =
 \lo\left(\frac{\beta^2}{\cos^2\theta}+ (1-2\beta)\right)^{\frac12}, \text{and}\; 
 \lth(\theta, \beta)=\lo\,\frac{\beta}{\cos\theta}. \] 
Thus if \eqref{E:impossibleH} were to hold, we would have that 
$$
\lim\limits_{\theta\to 0}\, \frac{(4\text{Area}(\Delta))^2\, \H (z_1(\theta, \beta), 0, z_3)}{\lo\,\ltw(\theta, \beta)\,\lth (\theta, \beta)}\, = 0 \quad \text{for all}\ \ \beta>1,
$$
and this, in turn, would imply that
$$
0= \lim\limits_{\theta\to 0}\, A(\theta, \beta, \zt) = 
\lo\bigg(\!\!\cos \big(h(\zt)\big) + (\beta-1)\cos \big(h(\beta \zt) - h(\zt)\big)-\beta\cos\big(h(\beta\zt)\big)\!\bigg)
$$
for all $\beta>1$, that is
\begin{equation}\label{E:A=zero}
\cos \big(h(\zt)\big) + (\beta-1)\cos \big(h(\beta \zt) - h(\zt)\big)-\beta\cos\big(h(\beta\zt)\big)\ =\ 0
\quad \text{for all}\quad \beta>1.
\end{equation}
\vskip0.1in
\noindent Applying this same reasoning to the family of triangles 
$\Delta (z_1'(\theta, \beta), 0, \zt')$,
we similarly obtain that
\begin{equation}\label{E:A-prime=zero}
\cos \big(h(\zt')\big) + (\beta-1)\cos \big(h(\beta \zt') - h(\zt')\big)-\beta\cos\big(h(\beta\zt')\big)\ =\ 0
\quad \text{for all}\quad \beta>1.
\end{equation}
\vskip0.1in
\noindent In summary: if \eqref{E:impossibleH} were to hold for all non-collinear three-point configurations, then \eqref{E:A=zero} and \eqref{E:A-prime=zero} would have to be true for all $\beta>1$.
\vskip0.1in
\noindent Applying the half-angle identities: 
$$
\sin\! \big(\alpha\big)\! = 2\sin\!\left(\frac{\alpha}{2}\right)\cos\!\left(\frac{\alpha}{2}\right);\qquad 
\cos\! \big(\alpha\big) = \cos^2\!\left(\frac{\alpha}{2}\right) -\,
\sin^2\!\left(\frac{\alpha}{2}\right)\ \ 
$$
$$
\text{to}\ \ \alpha:= h(\beta\zt)
$$
we see that \eqref{E:A=zero} has the equivalent formulation
\begin{equation}\label{E:A=zero-half}
(1-\beta)\left\{\cos(h(\zt))\bigg[\cos^2\!\left(\frac{\alpha}{2}\right)-\sin^2\!\left(\frac{\alpha}{2}\right)\bigg]
+ 2\sin(h(\zt))\sin\!\left(\frac{\alpha}{2}\right)\cos\!\left(\frac{\alpha}{2}\right)\right\}\, +
\end{equation}
$$
\, +\,
\beta\bigg[\cos^2\!\left(\frac{\alpha}{2}\right)-\sin^2\!\left(\frac{\alpha}{2}\right)\bigg] 
 -\cos(h(\zt))[\cos^2\!\left(\frac{\alpha}{2}\right)+\sin^2\!\left(\frac{\alpha}{2}\right)\bigg] =0\quad
 \text{for all}\ \beta>1.
$$
\vskip0.1in

\noindent A corresponding formulation holds for \eqref{E:A-prime=zero} (with $\alpha:= h(\beta\zt')$). There are now two possibilities:
\vskip0.2in
\begin{itemize}
\item[\underline{{\em Case 1:}}]\quad $\beta\zt \in S$ and $\beta\zt'\in S$ for all $\beta>1$.
\vskip0.2in
\item[\underline{{\em Case 2:}}]\quad $\beta_0\zt\notin S$ and/or $\beta_0'\zt'\notin S$ for some
$\beta_0$ and/or $\beta_0'>1$.
\end{itemize}
We show that each of {\em Case 1} and {\em Case 2} is either impossible, or leads to a contradiction.
\vskip0.2in
\underline{{\em Analysis of Case 1:}}\quad In this case we have that 
$$
\sin\!\left(\frac{h(\beta\zt)}{2}\right)\neq 0\quad \text{and}\quad \sin\!\left(\frac{h(\beta\zt')}{2}\right)\neq 0\quad \text{for all}\ \beta>1.
$$
Thus \eqref{E:A=zero} 
can be equivalently formulated
as
\begin{equation}\label{E:A=zero-case1}
\beta\big(1-\cos(h(\zt)\big)Y_\beta^2 + 2(1-\beta)\sin (h(\zt))Y_\beta -\bigg(\beta +(2-\beta)\cos(h(\zt)\bigg) = 0
\end{equation}
 for all $\beta > 1$, where we have set
\begin{equation}\label{E:Y}
Y_\beta:= \cot\left(\frac{h(\beta\zt)}{2}\right)\, .
\end{equation}
Similarly, we have that \eqref{E:A-prime=zero} is restated as
\begin{equation}\label{E:A-prime=zero-case1}
\beta\big(1-\cos(h(\zt')\big)Z_\beta^2 + 2(1-\beta)\sin (h(\zt'))Z_\beta -\bigg(\beta +(2-\beta)\cos(h(\zt')\bigg) = 0
\end{equation}
 for all $\beta > 1$, with
\begin{equation}\label{E:Z}
Z_\beta:= \cot\left(\frac{h(\beta\zt')}{2}\right).
\end{equation}
Note that \eqref{E:A=zero-case1} 
is a quadratic equation with discriminant $D(\beta)$ that satisfies
$$
\frac{D(\beta)}{4} = 2\big(1-\cos(h(\zt)\big)\beta^2 - 2\big(1-\cos(h(\zt)\big)\beta + \sin^2\big(h(\zt)\big)\, ,\quad \text{for all}\ \ \beta>1.
$$
Thus $D(\beta)$ lies on an up-ward looking parabola with vertex at $\beta_0=1/2$ and it follows that
$$
D(\beta)\, >\, 4\,\sin^2\big(h(\zt)\big)\geq 0\quad \text{for all}\ \ \beta>1.
$$
Similarly, the discriminant $D'(\beta)$ of the quadratic equation \eqref{E:A-prime=zero-case1} has
$$
D'(\beta)\, >\, 4\,\sin^2\big(h(\zt')\big)\geq 0\quad \text{for all}\ \ \beta>1.
$$
It follows that each of 
\eqref{E:A=zero-case1}
 and \eqref{E:A-prime=zero-case1} can be solved algebraically, giving us that
$$
\cot\!\left(\!\frac{h(\beta\zt)}{2}\!\right)
$$
equals one of the two quantities
$$
\frac{(\beta -1)\sin(h(\zt))\pm\big\{(\beta -1)^2\sin^2(h(\zt))+ \beta\big(1-\cos(h(\zt)\big)\big(\beta+(2-\beta)\cos(h(\zt))\big)\big\}
}
{\beta\big(1-\cos(h(\zt)\big)}^{\!\!1/2}
$$
for any $\beta>1$. We now let $\beta\to\infty$. One can verify that 
quantities above  converge as $\beta\to\infty$ with limits
\begin{equation*}\label{E:limitY}
\cot\left(\frac{h(\zt}{4}\right);\ -\tan\left(\frac{h(\zt}{4}\right)
\quad \text{respectively for the ``+'' and ``-'' determinations}.
\end{equation*}

We similarly have that 
$
\displaystyle{\cot\!\left(\!\frac{h(\beta\zt')}{2}\right)}
$
equals one of
$$
\frac{(\beta -1)\sin(h(\zt'))\pm\big\{(\beta -1)^2\sin^2(h(\zt'))+ \beta\big(1-\cos(h(\zt')\big)\big(\beta+(2-\beta)\cos(h(\zt'))\big)\big\}
}
{\beta\big(1-\cos(h(\zt')
\big)}^{\!\!1/2}\, ,
$$
and the latter converge as $\beta\to\infty$ to
$$
\cot\left(\frac{h(\zt'}{4}\right);\ -\tan\left(\frac{h(\zt'}{4}\right)\quad \text{respectively}.
$$
It follows that 
$$
\lim\limits_{\beta\to\infty} \cot\!\left(\!\frac{h(\beta\zt)}{2}\!\right)\ \in\  \left\{\cot\left(\frac{h(\zt}{4}\right);\, -\tan\left(\frac{h(\zt}{4}\right)\right\}
$$
and
$$
\lim\limits_{\beta\to\infty} \cot\!\left(\!\frac{h(\beta\zt')}{2}\!\right)\ \in\  \left\{\cot\left(\frac{h(\zt'}{4}\right);\, -\tan\left(\frac{h(\zt'}{4}\right)\right\}.
$$
\vskip0.1in

\noindent On the other hand, by our choice of $\zt :=sz_0$ and $\zt'=tz_0$ with $s, t\in\mathbb R^+$ 
we also have that
$$
\lim\limits_{\beta\to\infty} \cot\!\left(\!\frac{h(\beta\zt)}{2}\!\right) \ =\ 
\lim\limits_{\beta\to\infty} \cot\!\left(\!\frac{h(\beta\zt')}{2}\!\right).
$$
\vskip0.05in

The latter gives us a contradiction because by Lemma \ref{L:Properties-2} we have that
\vskip0.05in

$$
\left\{\cot\left(\frac{h(\zt}{4}\right);\, -\tan\left(\frac{h(\zt}{4}\right)\right\}\cap
\left\{\cot\left(\frac{h(\zt'}{4}\right);\, -\tan\left(\frac{h(\zt'}{4}\right)\right\}=\emptyset\, .
$$
\vskip0.05in
\noindent  The analysis of {\underline{\em Case 1}} is concluded.
 \vskip0.2in
\noindent  \underline{{\em Analysis of Case 2:}}\quad Suppose, for instance, that $\beta_0\zt\notin S$ for some
 $\beta_0>1$.
Then \eqref{E:A=zero-half} for $\beta=\beta_0$ reads
$$
(1-\beta_0)\cos(h(\zt)) +\beta_0-\cos(h(\zt))=0
$$
giving us
$$
\cos (h(\zt))=1,\quad \text{that is}\quad \zt\notin S\, .
$$
But this is impossible because $\zt\in S$. The situation when $\beta_0'\zt'\notin S$ for some
 $\beta_0'>1$ is treated in the same way.

 \vskip0.1in
 \noindent The proof of Theorem \ref{T:L-infty-H}
  is concluded.

 \vskip0.1in
\noindent {\bf Remark (D).} It is possible that the hypothesis that $h:\mathbb C\to\mathbb R$ is continuous may be relaxed.
Below we give an example where $h(z)$ is constant except at one point
 and we show that the corresponding $\H$ is unbounded.

Fix $\epsilon_0>0$ such that $\sin \epsilon_0>1/2$ and set
\begin{equation}\label{E:Ex-non-const}
\displaystyle{h(z) = \left\{
\begin{array}{lll}
0& \text{if}\ z\neq 0\\
-\epsilon_0 & \text{if}\ z=0
\end{array}
\right.
 } 
\end{equation}
Consider three-tuples of the form $\mathbf z_\lambda=\{0, 1, i\,\lambda\}$ with $\lambda>0$. Then
 it is easy to see that
$$
\H (\mathbf z_\lambda)\ =\ \frac12\frac{(1+\lambda^2)^{1/2}}{\lambda}\, 
\left\{\lambda\cos(\theta_{2,\,\lambda}+\epsilon_0) +\sin (\theta_{2,\,\lambda}+\epsilon_0)\right\}\, ,
$$
where $\theta_{2, \lambda}$ is the angle of $\Delta(0, 1, i\lambda)$ at the vertex
$z_2=1$.

Now the condition 
$
|\H (\mathbf z)|\,\leq C
$
for all three-tuples of non-collinear points would, in particular, require that
$$
\left\{\lambda\cos(\theta_{2,\,\lambda}+\epsilon_0) +\sin (\theta_{2,\,\lambda}+\epsilon_0)\right\}\leq\,
\frac{2\,\lambda}{(1+\lambda^2)^{1/2}}\,C\quad \text{for any}\ \lambda>0\, ,
$$
which is not possible because $\theta_{2,\,\lambda}\to 0$ as $\lambda\to 0$ and $\sin\epsilon_0>1/2$.
}

\end{document}